\title{Combinatorial Tilings of the Sphere by Pentagons}
\author{Min Yan\thanks{Research was supported by Hong Kong RGC General Research Fund 605610 and 606311.} \\ Hong Kong University of Science and Technology}
\newtheorem{theorem}{Theorem}
\newtheorem{lemma}[theorem]{Lemma}
\theoremstyle{definition}
\newtheorem*{definition*}{Definition}
\newtheorem*{case*}{Case}
\newtheorem*{subcase*}{Subcase}
\theoremstyle{remark}
\numberwithin{equation}{section}
\begin{document}

\maketitle

\begin{abstract}
A combinatorial tiling of the sphere is naturally given by an embedded graph. We study the case that each tile has exactly five edges, with the ultimate goal of classifying combinatorial tilings of the sphere by geometrically congruent pentagons. We show that the tiling cannot have only one vertex of degree $>3$. Moreover, we construct earth map tilings, which give classifications under the condition that vertices of degree $>3$ are at least of distance $4$ apart, or under the condition that there are exactly two vertices of degree $>3$. 
\end{abstract}

\section{Introduction}

Tilings are usually understood to be composed of geometrically congruent tiles. Typically we have finitely many \emph{prototiles} and require that every tile to be isometric to one of the prototiles. For tilings of the plane by polygons, this means that there is a one-to-one correspondence between the edges of each tile and the edges of a prototile, such that the adjacency of edges is preserved, the edge length is preserved, and the angle between adjacent edges is preserved. 

The combinatorial aspect of the tiling ignores the geometric information. For tilings of the plane, this means that we ignore the edge length and the angle information. What remains is only the number of edges of each tile. Dress, Delgado-Friedrichs, Huson~\cite{del, ddh, dress1, dress2} used the Delaney symbol to encode the combinatorial information and solved many combinatorial tiling problems. The Delaney symbol can also be turned into a computer algorithm for enumerating various classes of combinatorial tilings. For another research direction on the combinatorial aspects of tiling, see Schulte~\cite{s2}.

Our interest in the combinatorial tiling arises from the classification of \emph{edge-to-edge} and \emph{monohedral} tilings of the sphere. The tiles in such a tiling are all congruent and must be triangles, quadrilaterals, or pentagons. The classification of the triangular tilings of the sphere was started by Sommerville~\cite{so} in 1923 and completed by Ueno and Agaoka~\cite{ua} in 2002. We are particularly interested in the pentagonal tilings, which we believe is relatively easier to study than the quadrilateral tilings because $5$ is the ``other extreme'' among $3,4,5$. In \cite{gsy}, we classified the tilings of the sphere by $12$ congruent pentagons, where $12$ is the minimal number of pentagonal tiles. Unlike the triangle case, where the congruence in terms of the edge length is equivalent to the congruence in terms of the angle, we needed to study different kinds of congruences separately, and then obtained the final classification by combining the classifications of different congruences. For $12$ pentagonal tiles, the combinatorial structure is always the dodecahedron. Then we found all $8$ families of edge congruent tilings for the dodecahedron. We also found $7$ families of angle congruent tilings for the dodecahedron, plus perhaps around $20$ families of angle congruent tilings for a remaining configuration of angles in the pentagon. This remaining case is yet to be completely classified, but is fortunately not needed for the final classification in \cite{gsy}.

The purpose of this paper is to study the possible combinatorial configurations beyond the minimal case of $12$ pentagonal tiles. Of course this is only the first step in the complete classification. The next step is to separately study the edge congruence and the angle congruence. The final step is to combine the two congruences together. In \cite{ccy}, we further study edge congruent tilings, and completely classify for the case of the earth map tilings constructed in this paper. In \cite{ccy2}, we further study geometrically congruent tilings, especially for the case of the earth map tilings. On the other hand, in \cite{luk,ly}, we further study the numerics in angle congruent tilings. 

Now we make precise our object of study. A tiling in this paper is naturally given by a graph embedded in the sphere, which divides the sphere into tiles that are homeomorphic to the disk. If the tiles are geometrically congruent pentagons, then the tiling has the following combinatorial property.

\begin{definition*}
A \emph{combinatorial pentagonal tiling of the sphere} is a graph embedded in the sphere, such that the boundary of any tile is a simple closed path consisting of fives edges, and the degree of any vertex is $\ge 3$. 
\end{definition*}

A simple closed path of a combinatorial pentagonal tiling of the sphere divides the sphere into two disks. Each disk has their own combinatorial pentagonal tilings in the following sense.

\begin{definition*}
A \emph{combinatorial pentagonal tiling of the $2$-disk} is a graph embedded in the disk, such that the boundary of the disk consists of some edges of the graph, the boundary of any tile is a simple closed path consisting of fives edges, and the degree of any vertex in the \emph{interior} of the disk is $\ge 3$. 
\end{definition*}

The definition allows some vertices on the boundary to have degree $2$. If the boundary consists of $m$ edges, we also call the tiling a combinatorial pentagonal tiling of the $m$-gon.

Let $v_i$ be the number of vertices of degree $i$. Then it follows from the Euler equation and the Dehn-Sommerville equations that (see \cite[page 750]{gsy}, for example)
\[
v_3 = 20 + 2v_4+5v_5+8v_6 +\dotsb=20+\sum_{i\ge 4}(3i-10)v_i,
\]
and the number of tiles is
\[
f=12+2\sum_{i\ge 4}(i-3)v_i.
\]
In particular, the number of tiles must be even. 

Our first result concerns the smallest number of tiles beyond the minimum $12$. In other words, is there a tiling of the sphere by $14$ pentagons? Theorem \ref{1vertex} says that the tiling must have at least two vertices of degree $>3$, so that the next minimum number should be $16$. In fact, we will see below that there is a unique combinatorial spherical tiling by $16$ pentagons.

The equality above shows that vertices of degree $3$ dominate the others. However, Theorems \ref{distance5} and \ref{distance4} say that vertices of degree $>3$ cannot be too isolated. In fact, any vertex of degree $>3$ must have another vertex of degree $>3$ within distance $5$. Moreover, if the distances between vertices of degree $>3$ are always $\ge 4$, then the tiling must be the earth map tilings in Figures \ref{map5} and \ref{map4}.

On the other hand, vertices of degree $>3$ can also be quite ``crowded'' somewhere in the tiling. For example, given any two combinatorial pentagonal tilings $T$ and $T'$ of the sphere, we may construct the ``connected sum'' $T\# T'$ by deleting one tile each from $T$ and $T'$ and gluing the two together along the five boundary edges. The connected sum is a pentagonal tiling with all vertices along the five boundary edges having degree $>3$.

In general, we may define the \emph{earth map tilings} to be the ones with exactly two vertices of degree $>3$. Naturally we call these two vertices poles. Theorem \ref{2vertex} says that for each distance between the poles ranging from $1$ to $5$, there is exactly one family of earth map tilings. In increasing order of the distance, these families are given by Figures \ref{map1}, \ref{map2}, \ref{map3}, \ref{map4} and \ref{map5}.

Finally, we observe that the number of tiles in an earth map tiling is a multiple of $4$ for distance $5$ and a multiple of $12$ for the other distances. Therefore the only combinatorial tiling by $16$ pentagons is the earth map tiling of distance $5$.

\section{One Vertex of Degree $>3$}

\begin{theorem}\label{1vertex}
There is no combinatorial pentagonal tiling of the sphere with only one vertex of degree $>3$.
\end{theorem}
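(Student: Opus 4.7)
The plan is to argue by contradiction. Suppose $v$ is the unique vertex of degree $>3$, with $\deg v = k \geq 4$. I would build the tiling in three concentric layers around $v$. Since every other vertex must have degree exactly $3$, the local structure at each vertex one encounters is completely rigid, so the layers are forced, and I will show they must close up at a single vertex $Z$ opposite $v$ with $\deg Z \geq k$, contradicting the uniqueness of $v$.

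First, the $k$ pentagons around $v$ are forced into the form $F_i = v\,A_i\,X_i\,Y_i\,A_{i+1}$ (indices mod $k$) with outer boundary the $3k$-cycle $A_1X_1Y_1\cdots A_kX_kY_k$; using $\deg A_i = 3$ shows each $A_i$ is saturated, while each $X_i$ and $Y_i$ still carries one outward edge. In the second layer, the third tile at $A_i$ is forced into the pentagon $R_i = Y_{i-1}\,A_i\,X_i\,P_i\,Q_i$, and the tile $T_i$ across the edge $X_iY_i$ from $F_i$ must take the form $P_i\,X_i\,Y_i\,Q_{i+1}\,W_i$, since the outward neighbors of $X_i$ and $Y_i$ must appear on it. The updated outer boundary is the $3k$-cycle $Q_1P_1W_1\cdots Q_kP_kW_k$, with only the $W_i$ still carrying outward edges. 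In the third layer, the third tile $S_i$ at $P_i$ is forced (using the third tiles at $Q_i$ and $W_i$ as well) into the form $W_{i-1}\,Q_i\,P_i\,W_i\,Z_i$. The fifth vertex $Z_i$ is then the unique outward neighbor of $W_i$; but the same outward neighbor of $W_i$ can also be read off from $S_{i+1}$, giving $Z_i = Z_{i+1}$. Hence all the $Z_i$ coincide with a single vertex $Z$ adjacent to every $W_j$, and so $\deg Z \geq k \geq 4$—the desired contradiction.

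The hard part will be justifying that this layered picture really is forced, i.e., that no two labels among $\{A_i, X_i, Y_i, P_i, Q_i, W_i\}$ can secretly denote the same vertex before the argument reaches $Z$. Any such coincidence would place an extra edge at the identified vertex and push its degree above $3$, contradicting the hypothesis already; the bookkeeping for this rigidity is the one place where care is needed, but the degree-$3$ constraint makes it essentially mechanical.
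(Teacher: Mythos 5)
Your overall strategy is sound, and it is in fact the strategy the paper uses for Theorem \ref{distance5} rather than for Theorem \ref{1vertex}: grow the tiling in layers from the unique high-degree vertex, show that the layers are forced so the tiling must be the earth map tiling of Figure \ref{map5}, and read off a second high-degree vertex at the south pole. Your final step ($Z_i=Z_{i+1}$ via the unique third edge at $W_i$, hence $\deg Z\ge k\ge 4$) is correct. The paper instead deletes one tile at the high-degree vertex and applies Lemma \ref{cycle} to the complementary pentagon; either reduction of the theorem to a rigidity statement is legitimate.

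The gap is in the step you describe as ``essentially mechanical.'' An identification among your labels does \emph{not} in general push the degree of the identified vertex above $3$. For instance, if $A_i=A_j$, the identified vertex carries the two distinct edges to $v$ plus exactly one further edge --- degree exactly $3$ --- and what you actually obtain is a $2$-gon bounded by the two $v$--$A_i$ edges with some of the remaining edges at $v$ pointing into one of the two regions (the paper's Figure \ref{xx}). Likewise the identifications of type $X_i=Y_j$, $X_i=P_j$, $P_i=Q_j$, and so on produce degree-$3$ vertices sitting on short separating cycles of length $3$ through $6$ (Figures \ref{x=y=z} and \ref{xyz}); none of them is excluded by counting edges at the identified vertex. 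To dismiss them one must show that a $2$-, $3$-, $4$-, $5$- or $6$-gon with at most one high-degree boundary vertex and all interior vertices of degree $3$ admits no combinatorial pentagonal tiling (apart from the two exceptional cases), and that is precisely the content of Lemma \ref{cycle} and the tile-by-tile reduction scheme of Figures \ref{allcase}--\ref{specialcase} and \ref{xyz}--\ref{xyz_special} --- an induction on the number of tiles that constitutes the bulk of the paper's work. As written, your proposal defers the entire difficulty of the theorem to a step whose proposed justification (the degree-$3$ constraint alone) does not work.
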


Let $P$ be a tile with the only vertex of degree $>3$ as one of its five vertices. Then the complement of $P$ is a combinatorial pentagonal tiling of pentagon, such that the only vertex of degree $>3$ is on the boundary. The following result implies that there is no such tiling.

\begin{lemma}\label{cycle}
If a combinatorial pentagonal tiling of the $m$-gon, $m\le 7$, has at most one vertex of degree $>3$, and all vertices in the interior have degree $3$, then the tiling either consists of only one tile, or is the complement of one tile in the dodecahedron tiling.
\end{lemma}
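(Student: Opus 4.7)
Plan:

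I plan to induct on the number of tiles $f$, peeling tiles off the boundary and reducing to smaller cases. The preliminary counting uses Euler's formula $v - e + f = 1$ together with the pentagonal edge count $5f = 2e - m$; combined with the hypothesis that interior vertices have degree $3$ and at most one boundary vertex has degree $>3$, these yield $f = 6 + m$ in the all-degree-$3$ case and $f = m + 2d$ when the unique high-degree vertex has degree $d \ge 4$. Since $m \le 7$, the candidate pairs $(m,f)$ form a finite list, with base case $f = 1$ forcing $m = 5$ and giving the single-tile alternative.

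For the inductive step, I would pick a boundary tile $P$ with the maximum number $k$ of boundary edges. The case $k = 5$ makes $P$ the whole tiling. For $k \le 4$, I would first show that the $k$ boundary edges of $P$ form a single contiguous arc on the $m$-gon: any split arrangement would either force a chord of $P$ to coincide with a boundary edge of the $m$-gon (an immediate contradiction), or cut off a smaller sub-polygon of size $\le 7$ which, by the lemma applied inductively, admits no valid tiling. Then peeling $P$ yields a tiling of an $(m + 5 - 2k)$-gon by $f - 1$ tiles. The hypotheses transfer: removing $P$ never increases any vertex's degree, and the new boundary vertices inherited from $P$'s interior have degree $2$ or $3$.

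For $k \in \{3,4\}$, peeling decreases $m$, and applying the induction hypothesis to the peeled tiling forces it to be either a single pentagon or the dodecahedron complement, both requiring the new boundary size to be $5$. Tracking this back limits the original possibilities and either yields a contradiction (when $m \ne 5$) or forces the dodecahedral structure. The main obstacle will be $k = 1$ and $k = 2$, where peeling increases $m$ to $m+3$ or $m+1$ and can push it beyond $7$, so inductive reduction via peeling breaks down. For these cases I would analyze the boundary more globally: a degree-$2$ boundary vertex merges two consecutive boundary edges into a single tile, while a degree-$3$ vertex separates them, so the number of distinct boundary tiles equals $m - b_2$ with $b_2$ the count of degree-$2$ boundary vertices. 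With $m \le 7$ there are only a handful of boundary-tile patterns; combined with the interior-degree-$3$ constraint propagating adjacency conditions one layer inward, the valid configurations are forced, and I expect the bulk of the casework lies here.

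Finally, once the analysis narrows the tiling to $(m,f) = (5,11)$ with all vertices of degree $3$, I would identify it as the dodecahedron minus one tile by gluing a single pentagon along the $5$-gon boundary: the resulting closed tiling is a $3$-regular pentagonal tiling of the sphere, which is uniquely the dodecahedron by the classification in \cite{gsy}. Hence the original tiling is obtained from the dodecahedron by deleting one tile, as claimed.
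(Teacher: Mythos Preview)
Your Euler count is already off: the lemma permits boundary vertices of degree $2$, so with $b_2$ such vertices one gets $f = m + 6 - 2b_2$ (no high-degree vertex) or $f = m + 2d - 2b_2$ (one boundary vertex of degree $d$). The pair $(m,f)$ is therefore not determined by $m$ and $d$ alone, and your ``finite list'' is not the one you computed.

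The more serious gap is structural. Your inductive engine is ``peel a boundary tile with maximal $k$,'' but in the target configuration---the dodecahedron complement $(5.10)_{n=1}$---\emph{every} boundary tile has $k=1$, so peeling sends $m=5$ to $m=8$ and you are immediately outside the lemma's hypotheses. You acknowledge that $k\le 2$ is ``the main obstacle'' and propose to handle it by ``propagating adjacency conditions one layer inward,'' but this is exactly the entire content of the proof, and you have supplied no mechanism for doing it. The paper's mechanism is Lemma~\ref{sequence}: once you know which boundary vertices have degree $2$, degree $3$, or are the high-degree vertex, that lemma \emph{forces} the next tile deterministically (modulo the generic/non-generic dichotomy). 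This turns the problem into a finite reduction system among explicitly enumerated boundary types $(m.i)_n$ for $m\le 7$, and the proof is a check that every reduction chain terminates in a forbidden configuration from Figure~\ref{impossible} unless one started at $(5.1)_{n=0}$ or $(5.10)_{n=1}$. Your ``maximal $k$'' heuristic neither identifies which tile to remove nor tracks the boundary degree pattern needed to iterate; without a substitute for Lemma~\ref{sequence}, the plan for $k\le 2$ is not an argument.
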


There is a very good reason why we stop at $7$. Figure \ref{89gon} gives combinatorial pentagonal tilings of the $8$-gon and the $9$-gon. Their complements in the dodecahedron tiling are also combinatorial pentagonal tilings. 

\begin{figure}[h]
\centering
\begin{tikzpicture}[>=latex]

\foreach \x in {1,-1}
\draw[xscale=\x]
	(0,0.3) -- (40:0.8) -- (1,0) -- (-40:0.8) -- (0,-0.3) -- cycle;

\foreach \x in {0,1,2}
\draw[xshift=4cm, rotate=120*\x]
	(0,0) -- (30:0.6) -- (0.35,1) -- (-0.35,1) -- (150:0.6); 

\end{tikzpicture}
\caption{Pentagonal tilings of $8$-gon and $9$-gon.}
\label{89gon}
\end{figure}
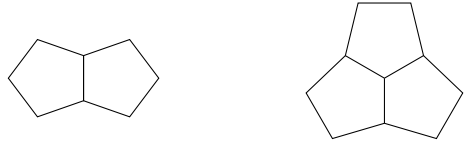

A key technique we use for proving Lemma \ref{cycle} and later results is the criterion for constructing pentagonal tiles. Let $e_-,e,e_+$ be three successive edges, connecting $x,y,z,w$. We say that $e_-$ and $e_+$ are \emph{on the same side} of $e$ if there is a path $\hat{e}$ connecting $x$ and $w$, such that $e_-,e,e_+,\hat{e}$ form a simple closed path, and the region enclosed by the simple closed path does not contain other edges at $y$ and $z$. In other words, all the edges at $y$ and $z$ other than $e_-,e,e_+$ are outside of the region enclosed by the simple closed path. See left of Figure \ref{side}.

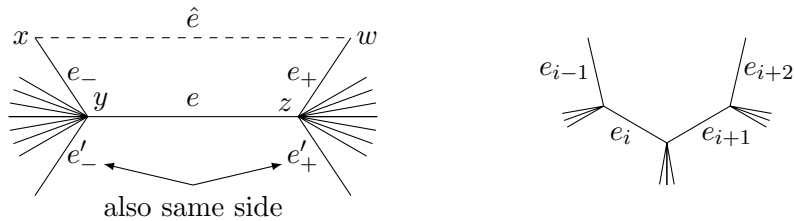
\begin{figure}[h]
\centering
\begin{tikzpicture}[>=latex,scale=0.7]

\draw
	(-2,0) node[above right=-2] {\small $y$} -- node[above] {\small $e$} (2,0) node[above left=-2] {\small $z$}
	(-3,1.5) node[left=-2] {\small $x$} -- node[right=-2] {\small $e_-$} (-2,0) -- node[right=-2] {\small $e_-'$} (-3,-1.5)
	(3,1.5) node[right=-2] {\small $w$} -- node[left=-2] {\small $e_+$} (2,0) -- node[left=-2] {\small $e_+'$} (3,-1.5);
	
\draw[dashed]
	(-3,1.5) -- node[above] {\small $\hat{e}$} (3,1.5);

\draw[<->]
	(-1.7,-0.9) -- (0,-1.3) node[below] {\small also same side} -- (1.7,-0.9);
	
\foreach \x in {1,-1}
\foreach \y in {1,-1}
\draw[xscale=\x, yscale=\y]
	(2,0) -- ++(0:1.5)
	(2,0) -- ++(10:1.5)
	(2,0) -- ++(20:1.5)
	(2,0) -- ++(30:1.5);

\draw[shift={(7.5cm,0.5cm)}]
	(0,1) -- node[left=-2] {\small $e_{i-1}$}
	(0.3,-0.3) -- node[below left=-3] {\small $e_i$}
	(1.5,-1) -- node[below right=-3] {\small $e_{i+1}$}
	(2.7,-0.3) -- node[right=-2] {\small $e_{i+2}$}
	(3,1);
	
\foreach \x in {1,-1}
\draw[shift={(9cm,0.5cm)},xscale=\x]
	(1.2,-0.3) -- ++(-10:0.8)
	(1.2,-0.3) -- ++(-20:0.8)
	(1.2,-0.3) -- ++(-30:0.8)
	(0,-1) -- ++(-80:0.8)
	(0,-1) -- ++(-90:0.8);

\end{tikzpicture}
\caption{Edges on the same side.}
\label{side}
\end{figure}

\begin{lemma}\label{sequence}
Suppose a sequence of edges $e_1,e_2,\dotsc,e_k$ satisfies the following.
\begin{enumerate}
\item $e_i$ and $e_{i+1}$ share one vertex.
\item $e_{i-1}$ and $e_{i+1}$ are on the same side of $e_i$.
\end{enumerate}
Then the edges in the sequence belong to the same tile. In particular, we have $k\le 5$. 
\end{lemma}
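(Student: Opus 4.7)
The plan is to interpret condition (2) as asserting that each triple $(e_{i-1}, e_i, e_{i+1})$ consists of three consecutive edges along the boundary of a specific tile $T_i$, and then to chain these identifications so that all $T_i$ collapse to a single pentagonal tile.

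First I would unpack condition (2). The simple closed path $e_{i-1}\cup e_i\cup e_{i+1}\cup \hat{e}_i$ divides the sphere into two disks, and by assumption the enclosed region $R_i$ contains no edges at the shared vertex $y$ of $e_{i-1}$ and $e_i$, or at the shared vertex $z$ of $e_i$ and $e_{i+1}$, besides the three given ones. Hence the face of the tiling adjacent to $e_i$ from the $R_i$-side, call it $T_i$, must have $e_{i-1}$ as its boundary edge preceding $e_i$ at $y$ and $e_{i+1}$ as its boundary edge succeeding $e_i$ at $z$; that is, $e_{i-1}, e_i, e_{i+1}$ are three consecutive boundary edges of $T_i$.

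Next I would establish $T_i = T_{i+1}$ for every applicable $i$. Both $T_i$ and $T_{i+1}$ carry $e_i, e_{i+1}$ as consecutive boundary edges meeting at their shared vertex $v$. When $v$ has degree at least $3$, there is a unique sector at $v$ between $e_i$ and $e_{i+1}$ free of other edges at $v$, and the ``no other edges at the endpoints'' clause of condition (2) forces both $R_i$ and $R_{i+1}$ to meet $v$ through this single sector; the two tile corners therefore coincide and $T_i = T_{i+1}$. For a degree-$2$ boundary vertex (permitted in the disk setting), a direct inspection of how $\hat{e}_i$ and $\hat{e}_{i+1}$ close off their respective regions near $v$ shows the two sides still match. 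Iterating gives a single pentagonal tile $T$ whose boundary contains every $e_j$; since $T$ has exactly five edges, the sequence has at most five distinct entries and so $k \le 5$.

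The main obstacle is the careful two-sidedness bookkeeping around each shared vertex---ensuring that the ``corner'' selected by $R_i$ really matches the one selected by $R_{i+1}$, and treating boundary vertices of degree $2$ correctly. Once this tile-identification chain is in place, the bound $k\le 5$ is an immediate consequence of the pentagonality of the tiles.
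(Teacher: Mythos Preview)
Your proposal is correct and follows essentially the same approach as the paper: the paper's one-sentence argument is that the tile containing the corner between $e_{i-1}$ and $e_i$ (on the appropriate side of $e_i$) coincides with the tile containing the corner between $e_i$ and $e_{i+1}$, and chaining these identifications gives a single pentagonal tile. Your write-up simply expands this observation with more care, including the explicit sector-uniqueness check at the shared vertex (and a remark on degree-$2$ boundary vertices) that the paper leaves implicit.
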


The conditions of the lemma are described on the right of Figure \ref{side}. In a combinatorial tiling of the sphere, any edge is shared by exactly two tiles. The tile on the side of $e_i$ that includes the corner between $e_{i-1}$ and $e_i$ is the same as the tile on the side of $e_{i+1}$ that includes the corner between $e_i$ and $e_{i+1}$. Lemma \ref{sequence} follows from this observation.

A consequence of Lemma \ref{sequence} is that the configurations in Figure \ref{impossible} are impossible. For example, if we apply the lemma to the triangle, then we find that the three edges are the boundary edges of a pentagonal tile. However, all five boundary edges of this tile should form a simple closed path. Since three of the five edges already form a simple closed path, we get a contradiction. For the pentagon, we need $n\ge 1$, which means at least one edge pointing to the interior. Beyond the pentagon, the contradiction is to have a tile with more than five boundary edges.

\begin{figure}[h]
\centering
\begin{tikzpicture}[>=latex,scale=1]

\foreach \a in {0,...,5}
\draw[xshift=2.5*\a cm]
	(0,0.8) -- ++(-80:0.3)
	(0,0.8) -- ++(-90:0.3) 
	(0,0.8) -- ++(-100:0.3);

\foreach \a in {0,1,2}
\fill (14+0.3*\a,0) circle (0.05);


\draw
	(0,0) circle (0.8)
	(0,0.8) -- ++(80:0.3)
	(0,0.8) -- ++(90:0.3) 
	(0,0.8) -- ++(100:0.3)
	(0,-0.8) -- ++(-80:0.3)
	(0,-0.8) -- ++(-90:0.3) 
	(0,-0.8) -- ++(-100:0.3);


\begin{scope}[shift={(2.5cm,-0.2cm)}]

\foreach \a in {0,1,2}
{
\draw[rotate=120*\a]
	(-30:1) -- (90:1);
\draw[shift={(90+120*\a:1)}, rotate=120*\a]
	(0,0) -- ++(80:0.3)
	(0,0) -- ++(90:0.3) 
	(0,0) -- ++(100:0.3);
}

\end{scope}


\begin{scope}[xshift=5cm]

\foreach \a in {0,...,3}
{
\draw[rotate=90*\a]
	(0:0.8) -- (90:0.8);
\draw[shift={(90*\a:0.8)}, rotate=90*\a]
	(0,0) -- ++(10:0.3)
	(0,0) -- ++(0:0.3) 
	(0,0) -- ++(-10:0.3);
}

\end{scope}


\begin{scope}[shift={(7.5 cm,-0.1 cm)}]

\foreach \a in {0,...,4}
{
\draw[rotate=72*\a]
	(18:0.9) -- (90:0.9);
\draw[shift={(18+72*\a:0.9)}, rotate=72*\a]
	(0,0) -- ++(8:0.3)
	(0,0) -- ++(18:0.3) 
	(0,0) -- ++(28:0.3);
}

\node at (0,0.45) {\scriptsize $n\ge 1$};
	
\end{scope}


\begin{scope}[xshift=10 cm]

\foreach \a in {0,...,5}
{
\draw[rotate=60*\a]
	(-30:0.8) -- (30:0.8);
\draw[shift={(30+60*\a:0.8)}, rotate=60*\a]
	(0,0) -- ++(20:0.3)
	(0,0) -- ++(30:0.3) 
	(0,0) -- ++(40:0.3);
}

\draw[shift={(150:0.8)}, rotate=-30]
	(0,0) -- ++(10:0.3)
	(0,0) -- ++(0:0.3) 
	(0,0) -- ++(-10:0.3);
	
\end{scope}


\begin{scope}[xshift=12.5 cm]

\foreach \a in {0,...,6}
{
\draw[rotate=51.43*\a]
	(90:0.8) -- (141.43:0.8);
\draw[shift={(90+51.43*\a:0.8)}, rotate=51.43*\a]
	(0,0) -- ++(80:0.3)
	(0,0) -- ++(90:0.3) 
	(0,0) -- ++(100:0.3);
}

\draw[shift={(141.43:0.8)}, rotate=-38.57]
	(0,0) -- ++(10:0.3)
	(0,0) -- ++(0:0.3) 
	(0,0) -- ++(-10:0.3);

\draw[shift={(38.57:0.8)}, rotate=-141.43]
	(0,0) -- ++(10:0.3)
	(0,0) -- ++(0:0.3) 
	(0,0) -- ++(-10:0.3);
			
\end{scope}

\end{tikzpicture}
\caption{Impossible configurations.}
\label{impossible}
\end{figure}
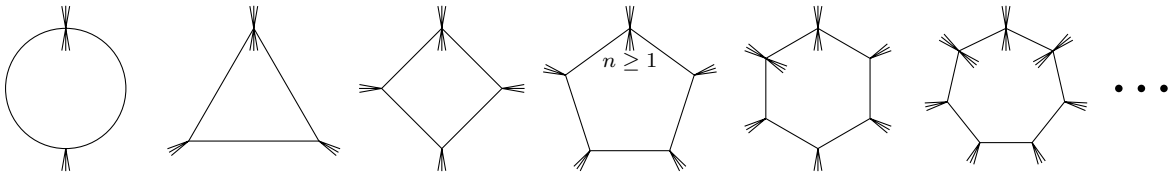

\begin{proof}[Proof of Lemma \ref{cycle}]
Let us first restrict to $m\le 5$, which is already sufficient for proving Theorem \ref{1vertex}. 

Under the given assumption, all the possible boundary configurations of the disk (with $m\le 5$) are listed in Figure \ref{allcase}. The only possible vertex of degree $>3$, which we will call the \emph{high degree vertex}, has degree $n+2$ in the disk.  Here ``high'' only means that there is no limit on how large $n$ can be, and the vertex can also have degree $2$ or $3$ on the other extreme. The lemma basically says that, with the exceptions of (5.1) with $n=0$ and (5.10) with $n=1$, all the other configurations will lead to contradiction.

\begin{figure}[h]
\centering
\begin{tikzpicture}[>=latex,scale=1]


\foreach \x in {0,1}
\draw[shift={(2.5*\x cm,0.2cm)}]
	(0,0) circle (0.8)
	(0,0.8) -- ++(-80:0.3)
	(0,0.8) -- ++(-90:0.3) node[below=-2] {\small $n$}
	(0,0.8) -- ++(-100:0.3);


\node at (0,-0.9) {\small (2.1)};
\fill (0,-0.6) circle (0.05);


\draw (2.5,-0.6) -- ++(90:0.3);
\node at (2.5,-0.9) {\small (2.2)};


\foreach \x in {2,3,4}
\draw[xshift=2.5*\x cm]
	(-30:1) -- (90:1) -- (210:1) -- cycle
	(90:1) -- ++(-80:0.3)
	(90:1) -- ++(-90:0.3) node[below=-2] {\small $n$}
	(90:1) -- ++(-100:0.3);
	

\node at (5,-0.9) {\small (3.1)};


\draw[xshift=7.5cm]
	(-30:1) -- ++(-30:-0.3);
\node at (7.5,-0.9) {\small (3.2)};


\draw[xshift=10cm]
	(-30:1) -- ++(-30:-0.3)
	(210:1) -- ++(210:-0.3);
\node at (10,-0.9) {\small (3.3)};


\begin{scope}[yshift=-2.2cm]

\foreach \x in {0,1,...,5}
\draw[xshift=2*\x cm]
	(-0.8,-0.8) rectangle (0.8,0.8)
	(-0.8,0.8) -- ++(-45:0.3) node[below right=-2] {\small $n$}
	(-0.8,0.8) -- ++(-35:0.3)
	(-0.8,0.8) -- ++(-55:0.3);


\node at (0,-1.1) {\small (4.1)};


\draw[xshift=2cm]
	(0.8,0.8) -- ++(45:-0.3);
\node[xshift=2cm] at (0,-1.1) {\small (4.2)};


\draw[xshift=4cm]
	(0.8,-0.8) -- ++(-45:-0.3);
\node[xshift=4cm] at (0,-1.1) {\small (4.3)};	


\draw[xshift=6cm]
	(0.8,0.8) -- ++(45:-0.3)
	(0.8,-0.8) -- ++(-45:-0.3);
\node[xshift=6cm] at (0,-1.1) {\small (4.4)};


\draw[xshift=8cm]
	(0.8,0.8) -- ++(45:-0.3)
	(-0.8,-0.8) -- ++(45:0.3);
\node[xshift=8cm] at (0,-1.1) {\small (4.5)};
	

\draw[xshift=10cm]
	(0.8,0.8) -- ++(45:-0.3)
	(-0.8,-0.8) -- ++(45:0.3)
	(0.8,-0.8) -- ++(-45:-0.3);
\node[xshift=10cm] at (0,-1.1) {\small (4.6)};

\end{scope}


\begin{scope}[yshift=-4.6cm]

\foreach \a in {0,...,4}
\foreach \b in {0,1}
{
\foreach \c in {0,...,4}
\draw[shift={(2.5*\a cm,-2.3*\b cm)},rotate=72*\c]
	(18:0.9) -- (90:0.9);

\draw[shift={(2.5*\a cm,-2.3*\b cm)}]
	(90:0.9) -- ++(-80:0.3)
	(90:0.9) -- ++(-90:0.3) node[below=-2] {\small $n$}
	(90:0.9) -- ++(-100:0.3);
}


\node at (0,-1) {\small (5.1)};


\draw[xshift=2.5cm]
	(18:0.9) -- ++(18:-0.3);
\node at (2.5,-1) {\small (5.2)};	


\draw[xshift=5cm]
	(-54:0.9) -- ++(-54:-0.3);
\node at (5,-1) {\small (5.3)};


\draw[xshift=7.5cm]
	(18:0.9) -- ++(18:-0.3)
	(-54:0.9) -- ++(-54:-0.3);
\node at (7.5,-1) {\small (5.4)};	


\draw[xshift=10cm]
	(18:0.9) -- ++(18:-0.3)
	(234:0.9) -- ++(234:-0.3);
\node at (10,-1) {\small (5.5)};	


\draw[yshift=-2.3cm]
	(18:0.9) -- ++(18:-0.3)
	(162:0.9) -- ++(162:-0.3);
\node at (0,-3.3) {\small (5.6)};


\draw[shift={(2.5cm,-2.3cm)}]
	(-54:0.9) -- ++(-54:-0.3)
	(234:0.9) -- ++(234:-0.3);
\node at (2.5,-3.3) {\small (5.7)};	


\draw[shift={(5cm,-2.3cm)}]
	(18:0.9) -- ++(18:-0.3)
	(-54:0.9) -- ++(-54:-0.3)
	(234:0.9) -- ++(234:-0.3);
\node at (5,-3.3) {\small (5.8)};	


\draw[shift={(7.5cm,-2.3cm)}]
	(18:0.9) -- ++(18:-0.3)
	(162:0.9) -- ++(162:-0.3)
	(-54:0.9) -- ++(-54:-0.3);
\node at (7.5,-3.3) {\small (5.9)};	


\draw[shift={(10cm,-2.3cm)}]
	(18:0.9) -- ++(18:-0.3)
	(162:0.9) -- ++(162:-0.3)
	(-54:0.9) -- ++(-54:-0.3)
	(234:0.9) -- ++(234:-0.3);
\node at (10,-3.3) {\small (5.10)};

\end{scope}

\end{tikzpicture}
\caption{All boundary configurations, $m\le 5$.}
\label{allcase}
\end{figure}
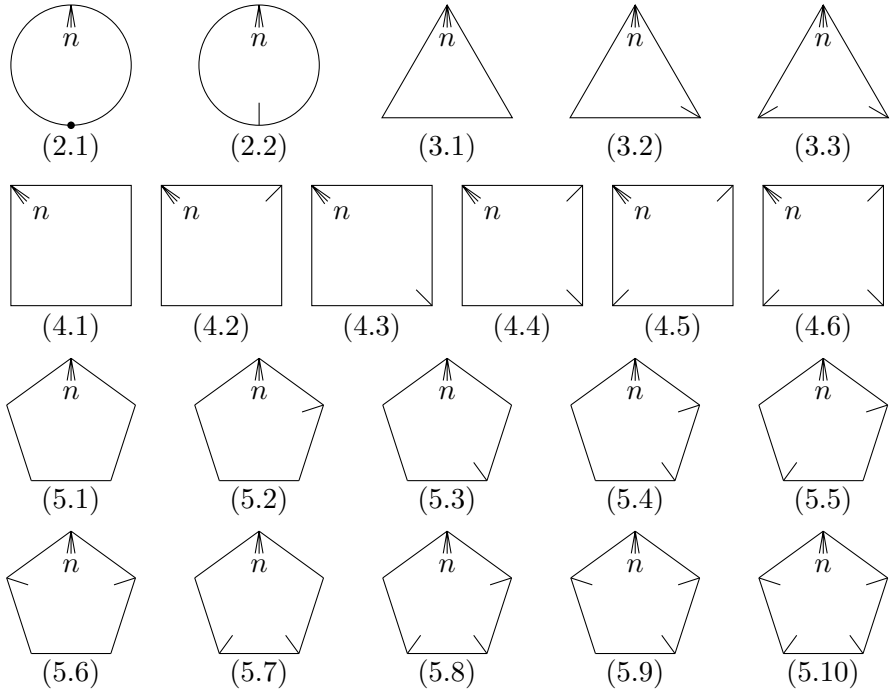

By Figure \ref{impossible}, the configurations (2.1), (3.1) and (4.1) are impossible. For the remaining 18 configurations, if $n$ is sufficiently large, then we may try to build the tiles one by one by making use of the degree $3$ assumption and Lemma \ref{sequence}. When the remaining region becomes one of the configurations in Figure \ref{allcase}, we say that the original configuration is reduced to the new configuration.

Figures \ref{3cycle} and \ref{5cycle} give all the major (so called \emph{generic}) reductions. There are also numerous \emph{non-generic} reductions and some special reductions. The upshot is that the reductions allow us to prove by inducting on the number of tiles. The lemma is the consequence of the fact that, unless we start with $(5.1)_{n=0}$ or $(5.10)_{n=1}$, the reductions never conclude with $(5.1)_{n=0}$ or $(5.10)_{n=1}$. Therefore in the subsequent proof, an argument ends (or a case is dismissed) once we arrive at a configuration (sometimes one configuration among several) in Figure \ref{allcase} that is neither $(5.1)_{n=0}$ nor $(5.10)_{n=1}$.

Figure \ref{3cycle} presents the generic reductions for the configurations (2.2), (3.2) and (3.3). The tiles are constructed with the help of Lemma \ref{sequence}, the degree $3$ assumption, and the additional assumption (hence the name ``generic'') that any newly created vertex is not identified with an existing one unless it is absolutely necessary. We also note that, whenever an edge from the high degree vertex (i.e., the vertex of degree $n+2$ on the boundary) is used, we only use the edge closest to the boundary. This means that the angles $\alpha$ in Figure \ref{3cycle} do not contain any more edges.

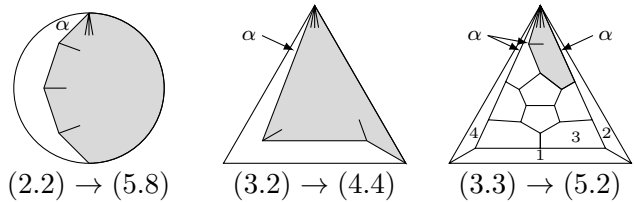
\begin{figure}[h]
\centering
\begin{tikzpicture}[>=latex,scale=1]

\filldraw[gray!30,draw=black]
	(0,1) -- (-0.4,0.6) -- (-0.6,0) -- (-0.4,-0.6) -- (0,-1) arc (-90:90:1);
\draw
	(0,0) circle (1)	
	(0,1) -- ++(-80:0.3)
	(0,1) -- ++(-90:0.3)
	(0,1) -- ++(-100:0.3)
	(-0.4,0.6) -- ++(-20:0.3)
	(-0.6,0) -- ++(0:0.3)
	(-0.4,-0.6) -- ++(20:0.3);
\node at (0,-1.3) {\small $(2.2)\to (5.8)$};
\node at (-0.35,0.83) {\scriptsize $\alpha$};

\begin{scope}[shift={(3cm,-0.3cm)}]
{
\filldraw[gray!30,draw=black]	
	(90:1.4) -- (210:0.8) -- (-30:0.8) -- (-30:1.4) -- cycle;
\draw	
	(0,1.4) -- ++(-80:0.3)
	(0,1.4) -- ++(-90:0.3)
	(0,1.4) -- ++(-100:0.3)
	(90:1.4) -- (210:1.4) -- (-30:1.4) -- cycle
	(210:0.8) -- (210:0.5)
	(-30:0.8) -- ++(110:0.3);
\node at (0,-1) {\small $(3.2)\to (4.4)$};
\draw[->]
	(-0.7,1) node[left=-2] {\scriptsize $\alpha$} -- (-0.27,0.8);
}
\end{scope}

\begin{scope}[shift={(6cm,-0.3cm)}]
{
\filldraw[gray!30,draw=black]	
	(90:1.4) -- (40:0.6) -- (45:0.4) -- (90:0.5) -- (100:0.9) -- cycle;
\draw	
	(0,1.4) -- ++(-80:0.3)
	(0,1.4) -- ++(-90:0.3)
	(0,1.4) -- ++(-100:0.3)
	(90:1.4) -- (210:1.4) -- node[above=-3] {\tiny 1} (-30:1.4) -- cycle
	(210:1.4) -- (210:1) -- (-30:1) -- (-30:1.4)
	(210:1) -- (190:0.7) -- (140:0.6) -- (90:1.4)
	(-30:1) -- (-10:0.7) -- (40:0.6) -- (90:1.4)
	(190:0.7) -- (210:0.3) -- (-90:0.3) -- (-90:0.5)
	(-10:0.7) -- node[below] {\tiny 3} (-30:0.3) -- (-90:0.3) -- (-90:0.5)
	(210:0.3) -- (160:0.2) -- (20:0.2) -- (-30:0.3)
	(160:0.2) -- (135:0.4) -- (140:0.6)
	(20:0.2) -- (45:0.4) -- (40:0.6)
	(45:0.4) -- (90:0.5) -- (135:0.4)
	(100:0.9) -- ++(0.2,0);
\node at (0,-1) {\small $(3.3)\to (5.2)$};
\node at (0.88,-0.3) {\tiny 2};
\node at (-0.88,-0.3) {\tiny 4};
\draw[->]
	(-0.7,1) node[left=-2] {\scriptsize $\alpha$} -- (-0.27,0.8);
\draw[->]
	(-0.7,1) -- (-0.16,0.9);
\draw[->]
	(0.7,1) node[right=-2] {\scriptsize $\alpha$} -- (0.27,0.8);
}
\end{scope}

\end{tikzpicture}
\caption{Generic reductions for two or three boundary edges.}
\label{3cycle}
\end{figure}

Figure \ref{otherred} shows many non-generic reductions for (3.3). Note that the non-generic reductions may conclude with more than one configurations. For example, the second reduction gives (2.1) and (4.4), and the third reduction gives (3.1) and $(5.1)_{n=0}$. For the purpose of proving the lemma, we only need one of these not to be $(5.1)_{n=0}$ or $(5.10)_{n=1}$. 

The reductions in Figure \ref{otherred} are created as follows. First by Lemma \ref{sequence}, we may create the pentagonal tile $1$ in Figure \ref{3cycle}. This creates three new vertices. Since the five vertices of the tile 1 are distinct (because of the simple closed curve assumption), if there is any identification of a new vertex with the existing one, it has to be identified with the high degree vertex. Up to symmetry, the first two pictures in Figure \ref{otherred} are all such identifications. Now we may assume the three new vertices are not identified with the existing ones and continue constructing the pentagonal tile $2$ in Figure \ref{3cycle}. The two new vertices cannot be identified with the high degree vertex because the five vertices of the tile 2 must be distinct. When the two new vertices are identified with the existing vertices of degree $3$, we get the third or the fourth picture. Then we may assume the two new vertices are ``really new'', and continue constructing the pentagonal tile $3$ in Figure \ref{3cycle}. The fifth and six pictures then indicate the cases that one of the vertices of the tile $3$ is identified with the high degree vertex.

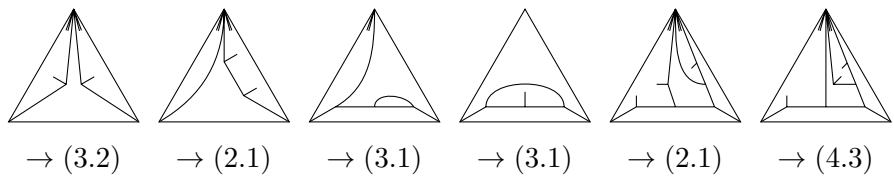
\begin{figure}[h]
\centering
\begin{tikzpicture}[>=latex,scale=1]

\foreach \x in {0,...,5}
\draw[xshift=2*\x cm]
	(90:1) -- (210:1) -- (-30:1) --cycle;

\draw
	(90:1) -- (180:0.1) -- (210:1)
	(90:1) -- (0:0.1) -- (-30:1)
	(180:0.1) -- ++(150:0.2)
	(0:0.1) -- ++(30:0.2)
	(90:1) -- ++(70:-0.3)
	(90:1) -- ++(75:-0.3)
	(90:1) -- ++(-70:0.3)
	(90:1) -- ++(-75:0.3);
\node at (0,-1) {\small $\to (3.2)$};

\draw[xshift=2cm]
	(90:1) to[out=-90,in=30] (210:1)
	(90:1) -- (90:0.3) -- (-30:0.3) -- (-30:1)
	(90:0.3) -- ++(30:0.2)
	(-30:0.3) -- ++(30:0.2)
	(90:1) -- ++(70:-0.3)
	(90:1) -- ++(75:-0.3)
	(90:1) -- ++(-70:0.3)
	(90:1) -- ++(-75:0.3);
\node[xshift=2cm] at (0,-1) {\small $\to (2.1)$};

\draw[xshift=4cm]
	(210:1) -- (210:0.6) -- (-30:0.6) -- (-30:1)
	(-30:0.6) to[out=120,in=90] (-90:0.3)
	(210:0.6) to[out=30,in=-90] (90:1)
	(90:1) -- ++(70:-0.3)
	(90:1) -- ++(75:-0.3);
\node[xshift=4cm] at (0,-1) {\small $\to (3.1)$};

\draw[xshift=6cm]
	(210:1) -- (210:0.6) -- (-30:0.6) -- (-30:1)
	(-30:0.6) to[out=90,in=90] (210:0.6)
	(-90:0.3) -- ++(90:0.2);
\node[xshift=6cm] at (0,-1) {\small $\to (3.1)$};

\draw[xshift=8cm]
	(210:1) -- (210:0.6) -- (-30:0.6) -- (-30:1)
	(-90:0.3) -- (-0.1,0) -- (90:1)
	(90:1) to[out=-90,in=180] (0:0.4)
	(90:1) -- (45:0.4) -- (0:0.4) -- (-30:0.6)
	(45:0.4) -- ++(45:-0.1)
	(210:0.6) -- ++(90:0.15)
	(-0.1,0) -- ++(-0.15,0)
	(90:1) -- ++(-80:0.3)
	(90:1) -- ++(-75:0.3)
	(90:1) -- ++(-110:0.3)
	(90:1) -- ++(-105:0.3);
\node[xshift=8cm] at (0,-1) {\small $\to (2.1)$};

\draw[xshift=10cm]
	(210:1) -- (210:0.6) -- (-30:0.6) -- (-30:1)
	(-90:0.3) -- (90:1)
	(90:1) -- (0.1,0) -- (0:0.4)
	(90:1) -- (45:0.4) -- (0:0.4) -- (-30:0.6)
	(45:0.4) -- ++(45:-0.1)
	(210:0.6) -- ++(90:0.15)
	(0.1,0) -- ++(45:0.15)
	(90:1) -- ++(-80:0.3)
	(90:1) -- ++(-75:0.3)
	(90:1) -- ++(-110:0.3)
	(90:1) -- ++(-105:0.3);
\node[xshift=10cm] at (0,-1) {\small $\to (4.3)$};

\end{tikzpicture}
\caption{Other possible reductions of $(3.3)$.}
\label{otherred}
\end{figure}

Analyzing each non-generic reduction at each step of the construction is rather tedious. An easy way to dismiss the non-generic reductions is to look at the distance between a new vertex and an existing one. For example, the gray region in Figure \ref{otherred2} is a newly constructed pentagonal tile, with two new vertices. The boundary of the remaining region of the triangle consists of $9$ edges. Therefore if any of the new vertex is identified with the existing one, one of the regions we get will have no more than four edges on the boundary, which is of the form $(2.*)$, $(3.*)$ or $(4.*)$. Since this is neither $(5.1)_{n=0}$ nor $(5.10)_{n=1}$, such a reduction fits into our proof.

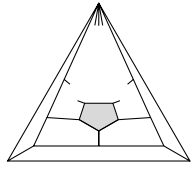
\begin{figure}[h]
\centering
\begin{tikzpicture}[>=latex,scale=1]

\filldraw[gray!30,draw=black]	
	(210:0.3) -- (160:0.2) -- (20:0.2) -- (-30:0.3) -- (-90:0.3) -- cycle;
\draw	
	(0,1.4) -- ++(-80:0.3)
	(0,1.4) -- ++(-90:0.3)
	(0,1.4) -- ++(-100:0.3)
	(90:1.4) -- (210:1.4) -- (-30:1.4) -- cycle
	(210:1.4) -- (210:1) -- (-30:1) -- (-30:1.4)
	(210:1) -- (190:0.7) -- (140:0.6) -- (90:1.4)
	(-30:1) -- (-10:0.7) -- (40:0.6) -- (90:1.4)
	(190:0.7) -- (210:0.3) -- (-90:0.3) -- (-90:0.5)
	(-10:0.7) -- (-30:0.3) -- (-90:0.3) -- (-90:0.5)
	(160:0.2) -- (160:0.3)
	(140:0.5) -- (140:0.6)
	(20:0.2) -- (20:0.3)
	(40:0.5) -- (40:0.6);

\end{tikzpicture}
\caption{Easy way to dismiss non-generic reductions.}
\label{otherred2}
\end{figure}

In general, if after constructing a new tile, the remaining region has no more than $10$ boundary edges, then the identification of the new and existing vertices will lead to reductions. With this observation, we may easily dismiss all the non-generic reductions along the way and get the generic reductions in Figure \ref{5cycle} for the $4$-gons and $5$-gons. The question mark means a configuration from Figure \ref{impossible}. We also note that the configuration (5.1) leads to a contradiction unless $n=0$.

\begin{figure}[h]
\centering
\begin{tikzpicture}[>=latex,scale=1]


\foreach \x in {0,1,2,3,4}
\draw[xshift=2.5*\x cm]
	(0,0) rectangle (2,2);


\filldraw[gray!30,draw=black]
	(0,2) -- (1,0.5) -- (2,2) -- cycle;
\draw	
	(0,2) -- ++(-45:0.3)
	(0,2) -- ++(-35:0.3)
	(1,0.5) -- (1,0.8);
\node at (1,-0.3) {\small $(4.2)\to (3.2)$};


\begin{scope}[xshift=2.5cm]
{
\filldraw[gray!30, draw=black]
	(0,2) -- (0.5,0.5) -- (1.5,0.5) -- (2,0) -- (2,2) -- cycle;
\draw	
	(0,2) -- ++(-45:0.3)
	(0,2) -- ++(-35:0.3)
	(0,2) -- ++(-55:0.3)
	(1.5,0.5) -- ++(0,0.3)
	(0.5,0.5) -- ++(45:0.3);
\node at (1,-0.3) {\small $(4.3)\to (5.4)$};
}
\end{scope}


\begin{scope}[xshift=5cm]
{
\filldraw[gray!30, draw=black]
	(0,2) -- (0.5,0.5) -- (1.5,0.5) -- (2,0) -- (2,2) -- cycle;
\draw	
	(0,2) -- ++(-45:0.3)
	(0,2) -- ++(-35:0.3)
	(0,2) -- ++(-55:0.3)
	(1.5,0.5) -- ++(0,0.3)
	(0.5,0.5) -- ++(45:0.3)
	(2,2) -- ++(45:-0.3);
\node at (1,-0.3) {\small $(4.4)\to (5.9)$};
}
\end{scope}


\begin{scope}[xshift=7.5cm]
{
\filldraw[gray!30,draw=black]
	(0,2) -- (0,0) -- (1.2,0.3) -- (1.7,0.8) -- (2,2) -- cycle;
\draw	
	(0,2) -- ++(-45:0.3)
	(0,2) -- ++(-35:0.3)
	(0,2) -- ++(-55:0.3)
	(1.2,0.3) -- ++(135:0.3)
	(1.7,0.8) -- ++(135:0.3);
\node at (1,-0.3) {\small $(4.5)\to (5.7)$};
}
\end{scope}


\begin{scope}[shift={(11cm,1cm)}]
{
\filldraw[gray!30,draw=black]
	(-1,1) -- (-0.7,0) -- (-0.2,0.2) -- (0,0.7) -- cycle;
\draw
	(-1,1) -- ++(-45:0.3)
	(-1,1) -- ++(-35:0.3)
	(-1,1) -- ++(-55:0.3)
	(-1,1) -- (-0.7,0) -- (-0.7,-0.7) -- (0.7,-0.7) -- (0.7,0.7) -- (0,0.7) -- cycle
	(-0.7,0) -- (-0.2,0.2) -- (0,0.7)
	(-0.7,-0.3) -- (-0.2,-0.3) -- (0,-0.5) -- (0,-0.7)
	(0.3,0.7) -- (0.3,0.2) -- (0.5,0) -- (0.7,0)
	(0,-0.5) -- (0.5,0)
	(-0.2,-0.3) -- (0,0) -- (0.3,0.2)
	(0,0) -- (-0.2,0.2)
	(-1,-1) -- (-0.7,-0.7)
	(1,1) -- (0.7,0.7)
	(1,-1) -- (0.7,-0.7);
\node at (0,-1.3) {\small $(4.6)\to (4.1)$};
}
\end{scope}


\begin{scope}[shift={(1cm,-1.7cm)}]
{

\foreach \x in {0,1,2,3,4}
\draw[xshift=2.5*\x cm]
	(18:1) -- (90:1) -- (162:1) -- (234:1) -- (-54:1) -- cycle;


\draw
	(0,1) -- ++(-90:0.3)
	(0,1) -- ++(-80:0.3)
	(0,1) -- ++(-100:0.3);
\node at (0,-1.2) {\small $(5.1)\to \times$};
\node at (0,0.2) {\small ? };
\node at (0,-0.2) {\small if $n\ge 1$};


\begin{scope}[xshift=2.5cm]
{
\filldraw[gray!30,draw=black]
	(0,1) -- (18:1) -- ++(220:0.6) to[out=220, in=250] (-0.2,0.4) -- cycle;
\draw
	(0,1) -- ++(-90:0.3)
	(0,1) -- ++(-80:0.3)
	(0,1) -- ++(-70:0.3);
\node at (0,-1.2) {\small $(5.2)\to (2.1)$};
}
\end{scope}	


\begin{scope}[xshift=5cm]
{
\filldraw[gray!30,draw=black]
	(0,1) -- (198:0.5) -- (-54:1) -- (18:1) -- cycle;
\draw
	(0,1) -- ++(-90:0.3)
	(0,1) -- ++(-80:0.3)
	(0,1) -- ++(-70:0.3)
	(198:0.5) -- (198:0.2);
\node at (0,-1.2) {\small $(5.3)\to (4.2)$};
}
\end{scope}	
	

\begin{scope}[xshift=7.5cm]
{
\filldraw[gray!30,draw=black]
	(0,1) -- (198:0.5) -- (-54:1) -- (18:1) -- cycle;
\draw
	(0,1) -- ++(-90:0.3)
	(0,1) -- ++(-80:0.3)
	(0,1) -- ++(-70:0.3)
	(18:1) -- ++(18:-0.3) 
	(198:0.5) -- (198:0.2);
\node at (0,-1.2) {\small $(5.4)\to (4.5)$};
}
\end{scope}	


\begin{scope}[xshift=10cm]
{
\filldraw[gray!30,draw=black]
	(0,1) -- (162:1) -- (0,-0.5) -- (18:1) -- cycle;
\draw
	(0,1) -- ++(-90:0.3)
	(0,1) -- ++(-80:0.3)
	(0,1) -- ++(-100:0.3)
	(0,-0.2) -- (0,-0.5);
\node at (0,-1.2) {\small $(5.6)\to (4.3)$};
}
\end{scope}

}
\end{scope}


\begin{scope}[shift={(0.8cm,-4.5cm)},scale=0.8]
{
	
\foreach \x in {0,1,2,3,4}
\draw[xshift=3.2*\x cm]
	(18:1.5) -- (90:1.5) -- (162:1.5) -- (234:1.5) -- (-54:1.5) -- cycle;
	

\fill[gray!30]
	(0,1.5) -- (162:0.8) -- (0,0) -- (18:0.8) -- (54:0.9) -- cycle;
\draw	
	(0,1.5) -- ++(-80:0.4)
	(0,1.5) -- ++(-90:0.4)
	(0,1.5) -- ++(-100:0.4)
	(234:1.5) -- (-90:1) -- (-18:1) -- (18:1.5)
	(0,1.5) -- (54:0.9) -- (18:0.8) -- (-18:1)
	(0,1.5) -- (162:0.8) -- (-90:1)
	(162:0.8) -- (0,0) -- (18:0.8)
	(0,0) -- (0,0.4)
	(54:0.9) -- ++(198:0.3);
\node at (0,-1.7) {\small $(5.5)\to (5.5)$};


\begin{scope}[xshift=3.2cm]
{
\filldraw[gray!30,draw=black]
	(0,1.5) -- (126:0.8) -- (162:0.6) -- (18:0.6) -- (54:0.8) -- cycle;
\draw	
	(0,1.5) -- ++(-80:0.4)
	(0,1.5) -- ++(-90:0.4)
	(0,1.5) -- ++(-100:0.4)
	(0,1.5) -- (18:1) -- (-54:1) -- (-54:1.5)
	(0,1.5) -- (162:1) -- (234:1) -- (234:1.5)
	(234:1) -- (0,-0.9) -- (-54:1)
	(0,-0.9) -- (0,-0.6)
	(162:1) -- (234:0.6) -- (0,-0.6)
	(18:1) -- (-54:0.6) -- (0,-0.6)
	(-54:0.6) -- (18:0.6) -- (162:0.6) -- (234:0.6)
	(126:0.8) -- ++(-18:0.3)
	(54:0.8) -- ++(18:-0.3);
\node at (0,-1.7) {\small $(5.7)\to (5.6)$};
}
\end{scope}


\begin{scope}[xshift=6.4cm]
{
\fill[gray!30]
	(0,1.5) -- (126:0.5) -- (72:0.9) -- cycle;
\draw	
	(0,1.5) -- ++(-80:0.4)
	(0,1.5) -- ++(-90:0.4)
	(0,1.5) -- ++(-100:0.4)
	(234:1.5) -- (234:1) 
	(-54:1) -- (-54:1.5)
	(0,1.5) -- (162:1) -- (234:1) -- (0,-0.9) -- (-54:1) -- (-18:0.9) -- (18:1) -- (54:0.9) -- (72:0.9) -- cycle
	(18:1) -- (18:1.5)
	(54:0.9) -- (54:0.5)
	(-18:0.9) -- (-18:0.5)
	(0,-0.9)  -- (0,-0.5)
	(162:1) -- (198:0.5)
	(54:0.5) -- (0,0) -- (198:0.5) -- (-90:0.5) -- (-18:0.5) -- cycle
	(0,0) -- (126:0.5) -- (0,1.5)
	(126:0.5) -- (72:0.9);
\node at (0,-1.7) {\small $(5.8)\to (3.1)$};
}
\end{scope}


\begin{scope}[xshift=9.6cm]
{
\fill[gray!30]
	(0,1.5) -- (162:1) -- (0,0) -- (126:0.5) -- (72:0.9) -- cycle;
\draw	
	(0,1.5) -- ++(-80:0.4)
	(0,1.5) -- ++(-90:0.4)
	(0,1.5) -- ++(-100:0.4)
	(-54:1) -- (-54:1.5)
	(162:1.5) -- (234:1) -- (-54:1) -- (-18:0.9) -- (18:1) -- (54:0.9) -- (72:0.9) -- (0,1.5)
	(18:1) -- (18:1.5)
	(54:0.9) -- (54:0.5)
	(-18:0.9) -- (-18:0.5)
	(234:1) -- (234:0.5)
	(0,1.5) -- (162:1) -- (234:0.5) -- (-18:0.5) -- (54:0.5)
	(162:1) -- (0,0) -- (54:0.5)
	(0,0) -- (126:0.5) -- (72:0.9)
	(126:0.5) -- ++(-0.2,0);
\node at (0,-1.7) {\small $(5.9)\to (5.3)$};
}
\end{scope}


\begin{scope}[xshift=12.8cm]
{
\draw	
	(0,1.5) -- ++(-80:0.4)
	(0,1.5) -- ++(-90:0.4)
	(0,1.5) -- ++(-100:0.4)
	(0,1.5) -- (108:0.9) -- (126:0.9) -- (126:0.5) -- (54:0.5) -- (54:0.9) -- (72:0.9) -- cycle
	(18:1.5) -- (18:1)
	(162:1.5) -- (162:1)
	(234:1.5) -- (234:1) 
	(-54:1.5) -- (-54:1)
	(0,1.5)-- (108:0.9) -- (126:0.9) -- (162:1) -- (198:0.9) -- (234:1) -- (0,-0.9) -- (-54:1) -- (-18:0.9) -- (18:1) -- (54:0.9) -- (72:0.9) -- cycle
	(54:0.9) -- (54:0.5)
	(126:0.9) -- (126:0.5)
	(-18:0.9) -- (-18:0.5)
	(-90:0.9) -- (-90:0.5)
	(198:0.9) -- (198:0.5)
	(54:0.5) -- (126:0.5) -- (198:0.5) -- (-90:0.5) -- (-18:0.5) -- cycle
	(72:0.9) -- (72:0.7)
	(108:0.9) -- (108:0.7);
\node at (0,-1.7) {\small $(5.10)\to \times$};
\node at (0,0.7) {\small ?};
}
\end{scope}

}
\end{scope}

\end{tikzpicture}
\caption{Generic reductions for four or five boundary edges.}
\label{5cycle}
\end{figure}
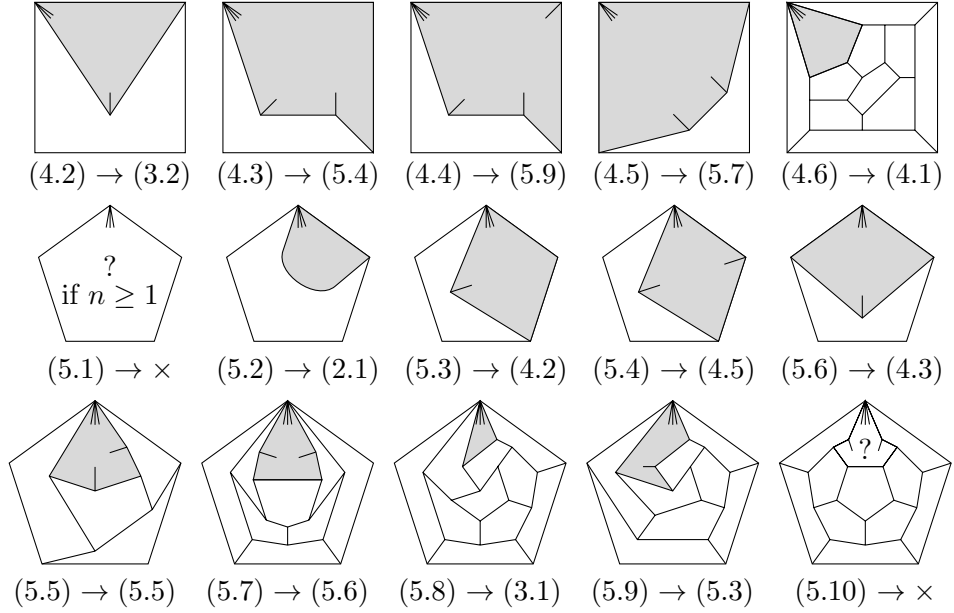

Note that $n=0$ means the configuration actually becomes another simpler configuration. For example, we have $(2.2)_{n=0}=(2.1)_{n=1}$, $(3.2)_{n=0}=(3.1)_{n=1}$, $(4.5)_{n=0}=(4.4)_{n=0}$, and $(5.5)_{n=0}=(5.3)_{n=1}$. Therefore we may always assume $n\ge 1$, with $(5.1)_{n=0}$ as the only exception. On the other hand, in the following cases, we need bigger $n$ to carry out the generic reductions. 
\begin{itemize}
\item $n\ge 2$ for (4.6), (5.5), (5.9), (5.10).
\item $n\ge 3$ for (3.3), (5.8).
\item $n\ge 4$ for (5.7).
\end{itemize} 
So it remains to reduce the special cases when the requirement on $n$ is not satisfied. These are listed in Figure \ref{specialcase}, with question marks meaning configurations from Figure \ref{impossible}.  The only one that does not lead to contradiction is $(5.10)_{n=1}$, which is the complement of one tile in the dodecahedron tiling.

\begin{figure}[h]
\centering
\begin{tikzpicture}[>=latex,scale=1]


\foreach \x in {0,1}
\draw[xshift=3*\x cm]
	(90:1.4) -- (210:1.4) -- (-30:1.4) -- cycle;


\draw
	(90:0.8) -- (150:0.5) -- (210:0.8) -- (-90:0.5) -- (-30:0.8) -- (30:0.5) -- cycle
	(-90:0.3) -- (30:0.3) -- (150:0.3) -- cycle
	(90:1.4) -- (90:0.8)
	(210:1.4) -- (210:0.8)
	(-30:1.4) -- (-30:0.8)
	(30:0.5) -- (30:0.3)
	(150:0.5) -- (150:0.3)
	(-90:0.5) -- (-90:0.3);
\node at (0,-1) {\small $(3.3)_{n=1}$};
\node at (0,-0.05) {\small ?};


\begin{scope}[xshift=3cm]
{
\draw
	(210:1.4) -- (210:1) -- (-30:1) -- (-30:1.4)
	(210:1) -- (190:0.7) -- (140:0.6) -- (90:1.4)
	(-30:1) -- (-10:0.7) -- (40:0.6) -- (90:1.4)
	(190:0.7) -- (210:0.3) -- (-90:0.3) -- (-90:0.5)
	(-10:0.7) -- (-30:0.3) -- (-90:0.3) -- (-90:0.5)
	(210:0.3) -- (160:0.2) -- (20:0.2) -- (-30:0.3)
	(160:0.2) -- (135:0.4) -- (140:0.6)
	(20:0.2) -- (45:0.4) -- (40:0.6)
	(45:0.4) -- (90:0.5) -- (135:0.4)
	(90:0.5) -- (90:0.7);
\node at (0,-1) {\small $(3.3)_{n=2}$};
\node at (90:0.9) {\small ?};
}
\end{scope}


\begin{scope}[shift={(6cm,0.3cm)}]
{
\foreach \x in {0,1,2,3}
\draw[rotate=90*\x]
	(-1,1) -- (1,1) -- (0.7,0.7) -- (-0.7,0.7)
	(0.4,0) -- (0,0.4) -- (0,0.7);
\node at (0,-1.3) {\small $(4.6)_{n=1}$};
\node at (0,0) {\small ?};
}
\end{scope}

\begin{scope}[shift={(9cm,0.3cm)}, scale=0.8]
{
\foreach \x in {0,1,...,4}
\draw[rotate=72*\x] 
	(90:1.5) -- (18:1.5) -- (18:1) 
	(-18:0.9) -- (18:1) -- (54:0.9) -- (54:0.5) -- (-18:0.5);
\node at (0,-1.6) {\small $(5.10)_{n=1}$};
}
\end{scope}


\begin{scope}[shift={(-1cm,-2.6cm)},scale=0.8]
{
	
\foreach \x in {0,1,...,4}
\foreach \y in {0,1,...,4}
\draw[xshift=3.3*\x cm, rotate=72*\y]
	(18:1.5) -- (90:1.5);
	

\draw
	(90:1.5) -- (90:1)
	(234:1.5) -- (234:1)
	(-54:1.5) -- (-54:1)
	(90:1) -- (234:1) -- (-54:1) -- cycle
	;
\node at (0,-1.6) {\small  $(5.5)_{n=1}$};
\node at (0,-2.2) {\small $(5.7)_{n=1}$};
\node at (0,0) {\small ?};


\begin{scope}[xshift=3.3cm]
{
\draw
	(0,1.5) -- (18:1) -- (-54:1) -- (-54:1.5)
	(0,1.5) -- (162:1) -- (234:1) -- (234:1.5)
	(234:1) -- (0,-0.9) -- (-54:1)
	(0,-0.9) -- (0,-0.6)
	(162:1) -- (234:0.6) -- (-90:0.6) -- (-54:0.6) -- (18:1)
	(234:0.6) to[out=90,in=180] (90:0.6) to[out=0,in=90] (-54:0.6);
\node at (0,-1.6) {\small  $(5.7)_{n=2}$};
\node at (0,0) {\small ?};
}
\end{scope}


\begin{scope}[xshift=6.6cm]
{
\draw
	(0,1.5) -- (18:1) -- (-54:1) -- (-54:1.5)
	(0,1.5) -- (162:1) -- (234:1) -- (234:1.5)
	(234:1) -- (0,-0.9) -- (-54:1)
	(0,-0.9) -- (0,-0.6)
	(162:1) -- (234:0.6) -- (-90:0.6) -- (-54:0.6) -- (18:1)
	(90:1.5) -- (90:1)
	(234:0.6) -- (162:0.6)
	(-54:0.6) -- (18:0.6)
	(90:1) -- (162:0.6) -- (18:0.6) -- cycle;
\node at (0,-1.6) {\small  $(5.7)_{n=3}$};
\node at (0,0.5) {\small ?};
}
\end{scope}


\begin{scope}[xshift=9.9cm]
{
\foreach \x in {0,1,2}
\draw[rotate=72*\x]
	(234:1.5) -- (234:1) -- (-90:0.9) -- (-54:1);
\draw
	(0,1.5)  -- (0,1) -- (234:1)
	(-90:0.9) -- (0,0) -- (54:0.9)
	(0,0) -- (-54:0.5) -- (-18:0.9)
	(-54:0.5) -- ++(72:0.4);
\node at (0,-1.6) {\small  $(5.8)_{n=1}$};
\node at (0,-2.2) {\small $(5.9)_{n=1}$};
\node at (0.6,0.3) {\small ?};
}
\end{scope}


\begin{scope}[xshift=13.2cm]
{
\draw
	(234:1.5) -- (234:1) 
	(-54:1) -- (-54:1.5)
	(0,1.5) -- (162:1) -- (234:1) -- (0,-0.9) -- (-54:1) -- (-18:0.9) -- (18:1) -- (54:0.9) -- (72:0.9) -- cycle
	(18:1) -- (18:1.5)
	(54:0.9) -- (54:0.5)
	(-18:0.9) -- (-18:0.5)
	(0,-0.9)  -- (0,-0.5)
	(162:1) -- (198:0.5)
	(54:0.5) -- (0,0) -- (198:0.5) -- (-90:0.5) -- (-18:0.5) -- cycle
	(0,0) -- (126:0.5) -- ++(-0.2,0.2)
	(126:0.5) -- (72:0.9);
\node at (0,-1.6) {\small $(5.8)_{n=2}$};
\node at (-0.2,0.8) {\small ?};
}
\end{scope}

}
\end{scope}

\end{tikzpicture}
\caption{Special reductions.}
\label{specialcase}
\end{figure}
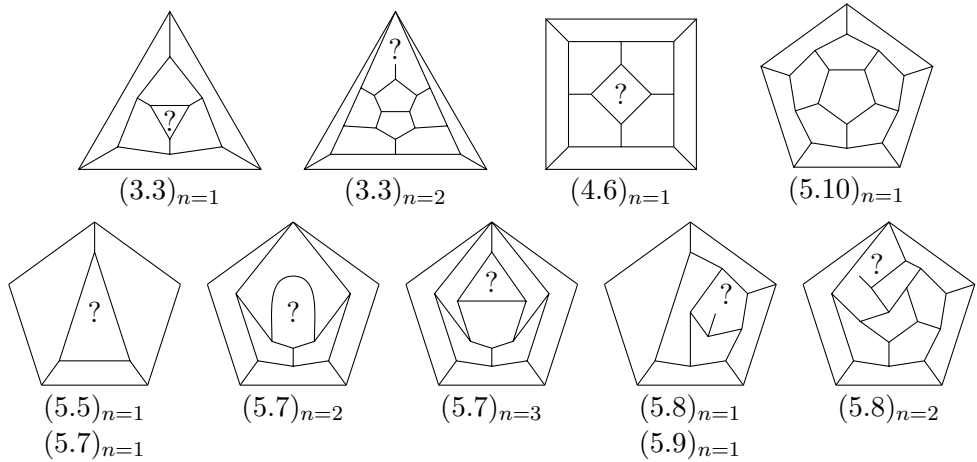

This completes the proof for $m\le 5$. Now we turn to $6$-gons and $7$-gons. There are many possible configurations to consider. However, observe that if part of the boundary is as illustrated by the thick lines in Figure \ref{easyred}, then we can always produce the reduction. Moreover, we allow a vertex at the fringe of the thick lines to be the high degree vertex (see lower left corner of the first picture). 

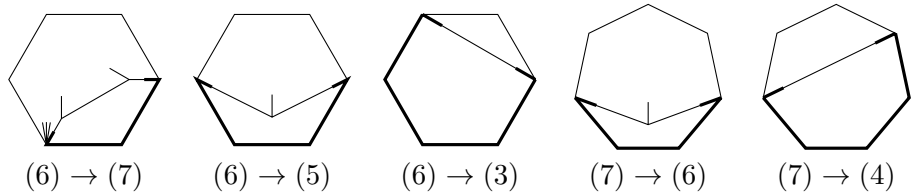
\begin{figure}[h]
\centering
\begin{tikzpicture}[>=latex,scale=1]


\foreach \x in {0,1,2}
\draw[xshift=2.5*\x cm]
	(0:1) -- (60:1) -- (120:1) -- (180:1) -- (240:1) -- (300:1) -- cycle;

\draw
	(240:1) -- ++(90:0.3)
	(240:1) -- ++(80:0.3)
	(240:1) -- ++(100:0.3)
	(0:1) -- (0:0.6) -- (240:0.6) -- (240:1)
	(240:0.6) -- ++(0,0.3)
	(0:0.6) -- ++(150:0.3);
\draw[very thick]
	(240:0.8) -- (240:1) -- (300:1) -- (0:1) -- (0:0.8);
\node at (0,-1.3) {\small $(6)\to (7)$};

\begin{scope}[xshift=2.5cm]
{
\draw
	(0:1) -- (-90:0.5) -- (180:1)
	(-90:0.5) -- (-90:0.2);
\draw[very thick]
	(-0.8,-0.1) -- (180:1) -- (240:1) -- (300:1) -- (0:1) -- (0.8,-0.1);
\node at (0,-1.3) {\small $(6)\to (5)$};
}
\end{scope}

\begin{scope}[xshift=5cm]
{
\draw
	(0:1) -- (120:1);
\draw[very thick]
	(120:1) -- (180:1) -- (240:1) -- (300:1) -- (0:1) 
	(0:1) -- ++(-30:-0.3)
	(120:1) -- ++(-30:0.3);
\node at (0,-1.3) {\small $(6)\to (3)$};
}
\end{scope}


\foreach \x in {0,1}
\draw[xshift=7.5cm+2.5*\x cm]
	(-14:1) -- (38:1) -- (90:1) -- (142:1) -- (194:1) -- (246:1) -- (-66:1) --  cycle;

\begin{scope}[xshift=7.5cm]
{
\draw
	(194:1) -- (0,-0.6) -- (-14:1)
	(0,-0.6) -- (0,-0.3);
\draw[very thick]
	(194:1) -- (246:1) -- (-66:1) -- (-14:1)
	(-14:1) -- ++(200:0.3)
	(194:1) -- ++(-20:0.3);
\node at (0,-1.3) {\small $(7)\to (6)$};
}
\end{scope}

\begin{scope}[xshift=10cm]
{
\draw
	(194:1) -- (38:1);
\draw[very thick]
	(194:1) -- (246:1) -- (-66:1) -- (-14:1) -- (38:1)
	(38:1) -- ++(206:0.3)
	(194:1) -- ++(26:0.3);
\node at (0,-1.3) {\small $(7)\to (4)$};
}
\end{scope}

\end{tikzpicture}
\caption{Easy reductions for six or seven boundary edges.}
\label{easyred}
\end{figure}

So we only need to study those configurations not in Figure \ref{easyred}. It is also easy to see that the configurations with one high degree vertex and one or two vertices of degree $3$ on the boundary either lead to contradictions or can be easily reduced. All the remaining non-trivial ones are illustrated in Figure \ref{7cycle}.

\begin{figure}[h]
\centering
\begin{tikzpicture}[>=latex,scale=1]



\foreach \x in {0,1,...,5}
\draw[rotate=60*\x]
	(0:1) -- (60:1);

\foreach \x in {0,1,...,4}
\draw[rotate=60*\x]
	(210:0.51) -- (210:0.3)
	(180:1) -- (180:0.6);

\foreach \x in {0,1,2,3}
\draw[rotate=60*\x]
	(180:0.6) -- (240:0.6)
	(210:0.3) -- (-90:0.3);

\draw
	(180:0.6) -- (90:0.51) -- (60:0.6)
	(210:0.3) -- (90:0.3)
	(120:1) -- ++(-60:0.3)
	(120:1) -- ++(-50:0.3)
	(120:1) -- ++(-70:0.3);
\node at (0,-1.3) {\small $(6)\to (5)$};


\foreach \x in {1,2,3}
\draw[xshift=2.5*\x cm]
	(-14:1) -- (38:1) -- (90:1) -- (142:1) -- (194:1) -- (246:1) -- (-66:1) --  cycle
	(90:1) -- ++(-90:0.3)
	(90:1) -- ++(-80:0.3)
	(90:1) -- ++(-100:0.3);

\foreach \x in {1,2,3}
{
\draw[xshift=2.5*\x cm]
	(90:1) -- (142:0.6) -- (194:0.6) -- (-66:0.6) -- (-66:1)
	(142:0.6) -- ++(-20:0.3)
	(194:1) -- (194:0.6)
	(-66:0.6) -- ++(50:0.3);
\node[xshift=2.5*\x cm] at (0,-1.3) {\small $(7)\to (7)$};
}

\draw[xshift=2.5cm]
	(38:1) -- (38:0.7);

\draw[xshift=5cm]
	(-14:1) -- (-14:0.7);

\draw[xshift=7.5cm]
	(38:1) -- (38:0.7)
	(-14:1) -- (-14:0.7);


\begin{scope}[shift={(-1.2cm,-2.7cm)}]
{
\foreach \x in {0,1,2,3,4}
\draw[xshift=2.5*\x cm]
	(-14:1) -- (38:1) -- (90:1) -- (142:1) -- (194:1) -- (246:1) -- (-66:1) --  cycle
	(90:1) -- ++(-90:0.3)
	(90:1) -- ++(-80:0.3)
	(90:1) -- ++(-100:0.3);

\draw
	(90:1) -- (142:0.6) -- (194:0.6) -- (246:0.6) -- (-14:0.6) -- (38:0.6) -- (0.1,0.3) -- (142:0.6)
	(194:1) -- (194:0.6)
	(246:1) -- (246:0.6)
	(-14:1) -- (-14:0.6)
	(38:1) -- (38:0.6) 
	(12:0.54) -- (220:0.54)
	(-66:0.14) -- (114:0.35)
	(0.1,0.3) -- ++(0,0.2);
\node at (0,-1.3) {\small $(7)\to (6)$};

\draw[xshift=2.5cm]
	(142:0.6) -- (246:0.6) -- (-14:0.6)
	(142:1) -- (142:0.6) -- ++(40:0.3)
	(246:1) -- (246:0.3)
	(-14:1) -- (-14:0.6) -- ++(90:0.3)
	(38:1) -- (38:0.7);
\node[xshift=2.5cm] at (0,-1.3) {\small $(7)\to (7)$};

\draw[xshift=5cm]
	(90:1) -- (142:0.6) -- (194:0.6) -- (246:0.6) -- (-66:0.6) -- (38:0.6)
	(142:0.6) -- (142:0.3)
	(194:1) --  (194:0.6)
	(246:1) -- (246:0.6)
	(-66:1) -- (-66:0.6)
	(38:1) -- (38:0.3)
	(142:0.3) -- (220:0.3) -- (-90:0.3) -- (38:0.3) -- (0,0.3) -- cycle
	(220:0.3) -- (220:0.55)
	(-90:0.3) -- (-90:0.55)
	(0,0.3) -- (0,0.5);
\node[xshift=5cm] at (0,-1.3) {\small $(7)\to (7)$};

\draw[xshift=7.5cm]
	(90:1) -- (142:0.6) -- (194:0.6) -- (246:0.6) -- (-66:0.6) -- (-14:0.6) -- (38:0.6) -- cycle
	(142:0.6) -- (142:0.3)
	(38:0.6) -- (38:0.3)
	(194:1) --  (194:0.6)
	(246:1) -- (246:0.6)
	(-66:1) -- (-66:0.6)
	(-14:1) -- (-14:0.6)
	(142:0.3) -- (220:0.3) -- (-90:0.3) -- (-40:0.3) -- (38:0.3) -- cycle
	(-40:0.3) -- (-40:0.55)
	(220:0.3) -- (220:0.55)
	(-90:0.3) -- (-90:0.55);
\node[xshift=7.5cm] at (0,-1.3) {\small $(7)\to (5)$};

\draw[xshift=10cm]
	(142:0.6) -- (246:0.6) -- (-66:0.6) -- (38:0.6)
	(142:1) -- (142:0.3) -- (-90:0.3) -- (38:0.3) -- (38:1) 
	(246:1) -- (246:0.6)
	(-66:1) -- (-66:0.6)
	(-90:0.3) -- (-90:0.55)
	(90:1) -- (55:0.7) -- (38:0.3)
	(90:1) -- (125:0.7) -- (142:0.3)
	(55:0.7) -- ++(-0.2,0)
	(125:0.7) -- ++(0.2,0);
\node[xshift=10cm] at (0,-1.3) {\small $(7)\to (6)$};

}
\end{scope}


\begin{scope}[yshift=-5.4cm]
{

\draw[xscale=-1]
	(-14:1) -- (38:1) -- (90:1) -- (142:1) -- (194:1) -- (246:1) -- (-66:1) --  cycle
	(142:1) -- (142:0.6) -- (194:0.6) -- (246:0.6) -- (-14:0.6) -- (38:0.6)
	(194:1) -- (194:0.6)
	(246:1) -- (246:0.6)
	(-14:1) -- (-14:0.6)
	(38:1) -- (38:0.6) 
	(12:0.54) -- (220:0.54)
	(168:0.54) -- (0,0) -- (64:0.54)
	(38:0.6) -- (90:0.6) -- (142:0.6)
	(-66:0.14) -- (0,0)
	(90:1) -- ++(-90:0.2)
	(90:1) -- ++(-80:0.2)
	(90:1) -- ++(-100:0.2)
	(90:0.6) -- ++(90:0.1);
\node at (0,-1.3) {\small $(7)\to (7)$};

\foreach \x / \c in {1/0,2/52,3/0}
{
\draw[xshift=2.5*\x cm,rotate=\c]
	(-14:1) -- (38:1) -- (90:1) -- (142:1) -- (194:1) -- (246:1) -- (-66:1) --  cycle
	(194:0.6) -- (246:0.6) -- (-66:0.6) -- (-14:0.6) -- (38:0.6)
	(194:1) --  (194:0.6)
	(246:1) -- (246:0.6)
	(-66:1) -- (-66:0.6)
	(-14:1) -- (-14:0.6)
	(38:1) -- (38:0.6)
	(220:0.3) -- (220:0.55)
	(-90:0.3) -- (-90:0.55)
	(-40:0.3) -- (-40:0.55)
	(12:0.3) -- (12:0.55)
	(220:0.3) -- (-90:0.3) -- (-40:0.3) -- (12:0.3) -- (116:0.1) -- cycle
	(194:0.6) -- (116:0.3) -- (38:0.6)
	(116:0.1) -- (116:0.3);
\node[xshift=2.5*\x cm] at (0,-1.3) {\small $(7)\to (7)$};
}

\draw[xshift=2.5cm] 
	(90:1) -- ++(-90:0.3)
	(90:1) -- ++(-80:0.3)
	(90:1) -- ++(-100:0.3);

\draw[xshift=5cm] 
	(90:1) -- ++(-100:0.3)
	(90:1) -- ++(-110:0.3)
	(90:1) -- ++(-120:0.3)
	(194:1) -- (194:0.7);

\draw[xshift=7.5cm] 
	(90:1) -- ++(-90:0.3)
	(90:1) -- ++(-80:0.3)
	(90:1) -- ++(-100:0.3)
	(142:1) -- (142:0.7);

}
\end{scope}

\end{tikzpicture}
\caption{Not so easy reductions for six or seven boundary edges.}
\label{7cycle}
\end{figure}
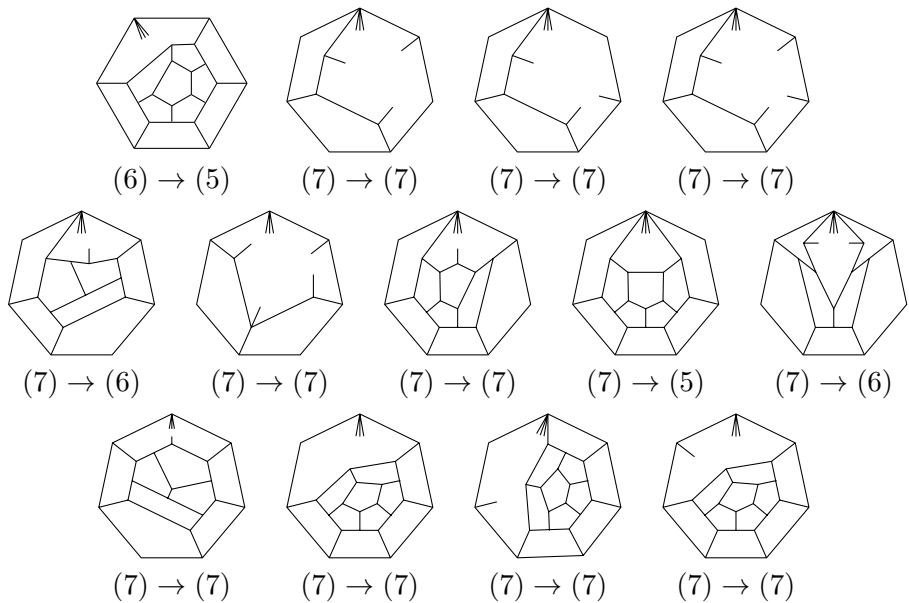

We need to be concerned about the non-generic reductions. Since we have included $6$-gons and $7$-gons in our study, a case can be dismissed if there are no more than $14$ boundary edges. The observation is sufficient for dismissing all the non-generic reductions.

We also need to consider the special cases in which there are not enough edges at the high degree vertex for carrying out the reductions. In this regard, we only need to worry about the case $n=1$ for the fourth and the fifth in the second row in Figure \ref{7cycle}. However, for $n=1$, they are respectively the second and the first in the second row. So the special cases have also been reduced.

Finally, we note that the reductions for the $6$-gons and $7$-gons never conclude with $(5.1)_{n=0}$ or $(5.10)_{n=1}$. Therefore all configurations eventually lead to contradictions.
\end{proof}

\section{Distance Between Vertices of Degree $>3$}

\begin{theorem}\label{distance5}
Suppose a combinatorial pentagonal tiling of the sphere has a vertex $\nu$ of degree $>3$, such that all vertices within distance $4$ of $\nu$ have degree $3$. Then the tiling is the earth map tiling in Figure \ref{map5}.
\end{theorem}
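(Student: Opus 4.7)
The plan is to build the tiling outward from $\nu$ layer by layer, using Lemma \ref{sequence} together with the degree-$3$ hypothesis, in the spirit of the generic reductions that drive the proof of Lemma \ref{cycle}. Write $\deg(\nu)=n+2\geq 4$, and stratify the vertices by their graph-distance from $\nu$; by hypothesis, strata $1,2,3,4$ consist entirely of degree-$3$ vertices.

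First I would construct the unique fan of $n+2$ pentagons meeting at $\nu$. The $n+2$ edges at $\nu$ determine $n+2$ distance-$1$ neighbors, each of degree $3$ with two of its incident edges already placed (one to $\nu$ and one along the boundary of an adjacent polar tile). Lemma \ref{sequence} then forces the remaining edge at each distance-$1$ vertex and the remaining corners of each polar pentagon. Any non-generic identification among the newly created distance-$2$ vertices would either force a distance-$\leq 4$ vertex to have degree $>3$—contradicting the hypothesis—or cut off a sub-disk with at most one high-degree vertex (namely $\nu$) on its boundary, to which Lemma \ref{cycle} applies; since the resulting boundary configuration is neither $(5.1)_{n=0}$ nor $(5.10)_{n=1}$, this yields a contradiction.

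I would then iterate this step three more times. After the polar fan, the boundary of the constructed cap is a cycle of distance-$2$ vertices with prescribed incident angles; each such vertex has degree $3$, so its remaining edge is forced, and Lemma \ref{sequence} produces the next ring of pentagons with far corners at distance $3$. Two further iterations at distances $3$ and $4$ give a combinatorially rigid polar cap around $\nu$ bounded by a cycle of distance-$4$ vertices, matching the northern half of Figure \ref{map5}. At every layer the same dichotomy applies to stray identifications: they either create a premature high-degree vertex within the distance-$4$ ball, contradicting the hypothesis, or isolate a sub-disk to which Lemma \ref{cycle} applies to yield a contradiction.

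To close the sphere, I would observe that the complement of the cap is a pentagonal tiling of a disk whose boundary agrees with the southern edge of Figure \ref{map5}. Any interior vertex of this complementary disk lying within distance $4$ of $\nu$ has degree $3$, and by the $(n+2)$-fold rotational symmetry inherited from $\nu$ together with a mirror-image repetition of the forced construction, the disk is filled in uniquely and its inner apexes collapse to a single antipodal vertex $\nu'$ of degree $n+2$ at distance $5$ from $\nu$; this is precisely the earth map tiling of Figure \ref{map5}. The main obstacle is the bookkeeping of non-generic identifications at each of the four extension steps: one must verify, case by case, that every premature identification is dismissed either by the distance-$4$ hypothesis or by applying Lemma \ref{cycle} to an appropriate sub-disk whose boundary contains at most one vertex of degree $>3$. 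The distance-$4$ cushion in the hypothesis is exactly what is needed to eliminate all such premature identifications.
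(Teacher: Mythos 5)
Your overall plan---grow the tiling layer by layer from $\nu$, using Lemma \ref{sequence} and the degree-$3$ hypothesis, and dismiss non-generic identifications---is the same strategy as the paper's. But there is a genuine gap in the tool you use to dismiss the identifications. You claim every premature identification either creates a high-degree vertex within distance $4$ of $\nu$ or ``cuts off a sub-disk with at most one high-degree vertex on its boundary, to which Lemma \ref{cycle} applies.'' Lemma \ref{cycle} requires, in addition, that \emph{all interior vertices of the sub-disk have degree $3$}. The hypothesis of Theorem \ref{distance5} only controls vertices within distance $4$ of $\nu$; when an identification among the distance-$1$, $2$ or $3$ vertices produces a simple closed path, \emph{both} complementary regions can contain, in their interiors, vertices at distance $\ge 5$ from $\nu$, about whose degrees nothing is known. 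So Lemma \ref{cycle} is not applicable to either side, and the dichotomy fails. This is precisely why the paper does not reuse Lemma \ref{cycle} here: it introduces a separate family of configurations (Figure \ref{xyz}) whose standing assumption is only that vertices within distance $3$ of the high-degree \emph{boundary} vertex have degree $3$, and it builds a self-contained closed system of reductions (Figures \ref{xyz_reduce} and \ref{xyz_special}) among those configurations, taking care that every vertex created during a reduction stays within distance $3$ of the boundary high-degree vertex so that its degree is still forced to be $3$. Your proof needs this (or an equivalent) replacement for the appeal to Lemma \ref{cycle}.

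A secondary problem is the closing step. You invoke ``the $(n+2)$-fold rotational symmetry inherited from $\nu$'' and a ``mirror-image repetition'' to collapse the inner apexes of the complementary disk to a single south pole. A combinatorial tiling has no a priori symmetry, so this is not an argument. The correct (and simpler) reason is the one in the paper: once all $x_*,y_*,z_*,w_*$ are shown distinct, Lemma \ref{sequence} applied along the boundary of the constructed cap forces the last ring of pentagons, and forces the third edges at consecutive $w_*$ to meet at a common fifth vertex of each such pentagon; since consecutive pentagons share these edges, all of them meet at one vertex $\sigma$. You should also record the small verifications the paper makes along the way, e.g.\ that $w_*$ cannot be identified with $x_*$ (topological), with $y_*$ or among themselves (degree count via the four distinct $z_*$), or with $z_*$ (Figure \ref{zw}).
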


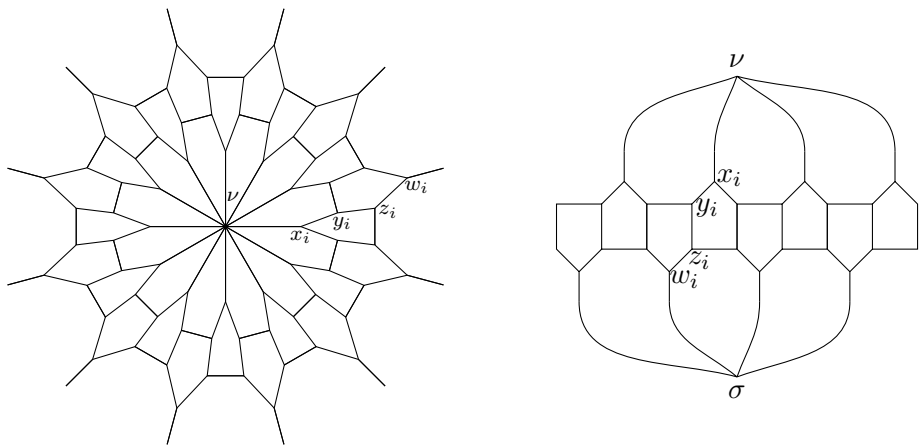
\begin{figure}[htp]
\centering
\begin{tikzpicture}[>=latex,scale=1]

\foreach \x in {0,1,...,11}
\foreach \y in {1,-1}
\draw[rotate=15+30*\x, yscale=\y]
		(0,0) -- (15:1)  -- (8:1.5) -- (8:2) -- (0:2.5) -- (0:3)
		(8:1.5) -- (-8:1.5)
		(8:2) -- (22:2);

\node at (0.1,0.4) {\scriptsize $\nu$};
\node at (1,-0.14) {\scriptsize $x_i$};
\node at (1.55,0.04) {\scriptsize $y_i$};
\node at (2.16,0.2) {\scriptsize $z_i$};
\node at (2.54,0.53) {\scriptsize $w_i$};

\begin{scope}[xshift=5cm]
{
\foreach \x in {0,1,2,3}
\draw[xshift=1.2*\x cm]
	(0.3,1) -- (0.3,0.6) -- (0,0.3) -- (0,-0.3) -- (-0.3,-0.6) -- (-0.3,-1) 
	(0,0.3) -- (-0.6,0.3) -- (-0.6,-0.3) -- (-0.3,-0.6)
	(0,-0.3) -- (0.6,-0.3) -- (0.6,0.3) -- (0.3,0.6);

\foreach \x in {1,-1}
\draw[xshift=1.8cm,scale=\x]
	(0,2) to[out=200,in=90] (-1.5,1)
	(0,2) to[out=250,in=90] (-0.3,1)
	(0,2) to[out=-40,in=90] (0.9,1)
	(0,2) to[out=-15,in=90] (2.1,1);

\node at (1.8,-2.2) {\small $\sigma$};
\node at (1.8,2.2) {\small $\nu$};
\node at (1.7,0.65) {\small $x_i$};
\node at (1.4,0.25) {\small $y_i$};
\node at (1.3,-0.45) {\small $z_i$};
\node at (1.1,-0.7) {\small $w_i$};
}
\end{scope}

\end{tikzpicture}
\caption{Earth map tiling, distance between poles $=5$.}
\label{map5}
\end{figure}

The tiling must have exactly two vertices $\nu$ and $\sigma$ (as the \emph{north} and \emph{south} poles) of degree $>3$, and the degrees at the two vertices must be the same. The first picture in Figure \ref{map5} is the view of the tiling from the north pole $\nu$. All the outward rays converge at the south pole $\sigma$. The second picture is our usual way of drawing maps, from the viewpoint of the equator.

An immediate consequence is that, if a combinatorial pentagonal tiling is not the earth map tiling in Figure \ref{map5}, then every vertex of degree $>3$ must have another vertex of degree $>3$ within distance $4$.

\begin{proof}
Starting with a vertex $\nu$ of degree $k>3$, we construct the pentagonal tiles layer by layer. We label the edges emanating from $\nu$ by $k$ distinct indices $i$ and denote the other end of the $i$-th edge by $x_i$. Since $x_*$ are of distance $1$ from $\nu$, they have degree $3$. If two $x_*$ are identified, say $x_i=x_j$, then we get a $2$-gon with $\nu$ and $x_i$ as the two vertices. Since $x_i$ has degree $3$, there are two possibilities for the $2$-gon, given in Figure \ref{xx}. The first appears in Figure \ref{impossible}, and the outside of the second is the first. This proves that all $x_*$ are distinct.

\begin{figure}[htp]
\centering
\begin{tikzpicture}[>=latex,scale=1]

\foreach \x in {0,1}
\draw[xshift=3*\x cm]
	(0,0) circle (0.8)
	(0,0.8) node[above right=-2] {\small $\nu$} -- ++(-90:0.3)
	(0,0.8) -- ++(-80:0.3)
	(0,0.8) -- ++(-100:0.3)
	(0,0.8) -- ++(90:0.3)
	(0,0.8) -- ++(80:0.3)
	(0,0.8) -- ++(100:0.3);

\draw
	(0,-0.8) node[above] {\scriptsize $x_i=x_j$} -- ++(-90:0.3);
	
\draw[xshift=3cm]
	(0,-0.8) node[below=-2] {\scriptsize $x_i=x_j$} -- ++(90:0.3);	

\fill 
	(0,-0.8) circle (0.05)
	(3,-0.8) circle (0.05);

\end{tikzpicture}
\caption{Two cases of $x_i=x_j$.}
\label{xx}
\end{figure}
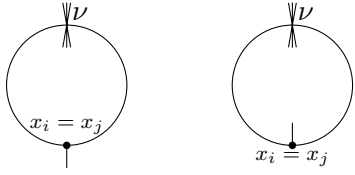

Each degree $3$ vertex $x_i$ is connected to two vertices $y_*$ in addition to $\nu$. Since the distance between $y_*$ and $\nu$ are $\le 2$, $y_*$ have degree $3$. If $x_i$ and $y_j$ are identified, then there are three possibilities, given in the first column of Figure \ref{x=y=z}. If two $y_*$ are identified, say $y_i=y_j$, then the identified vertex is a dot in the middle of the $x_ix_j$-edges in the first column, and we get the second column. Moreover, the third edge at the degree $3$ vertex $y_i$ may point either upward or downward, so that we actually have six possibilities in total.

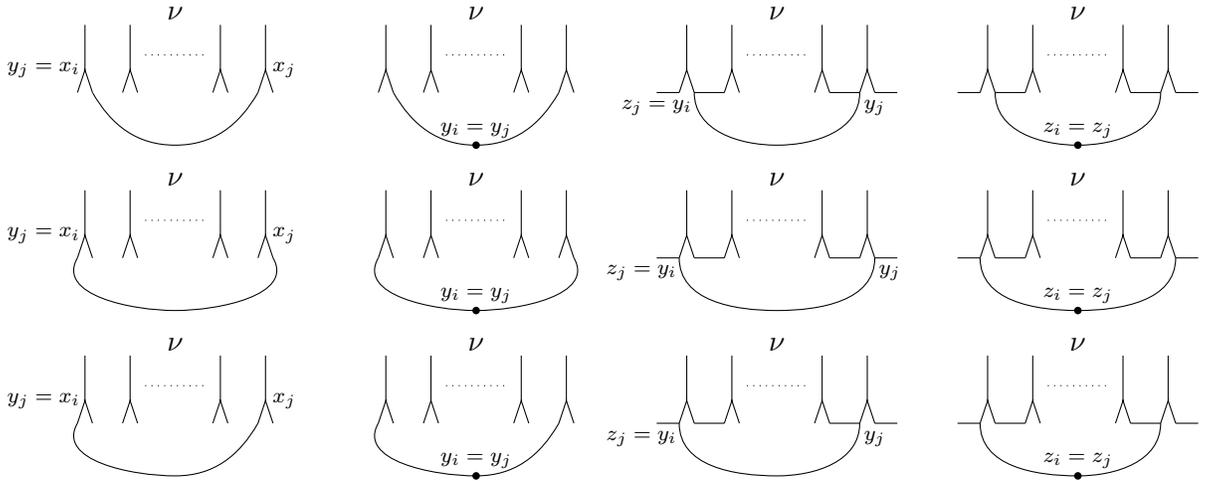
\begin{figure}[htp]
\centering
\begin{tikzpicture}[>=latex,scale=1]

\foreach \a in {0,1,2,3}
\foreach \b in {0,1,2}
{
\begin{scope}[shift={(4*\a cm,-2.2*\b cm)}]

\foreach \x in {-2,-1,1,2}
\draw[xshift=0.6*\x cm]
	(0,0.6) -- (0,0)
	(-0.1,-0.3)  -- (0,0) -- (0.1,-0.3);
	
\draw[dotted]
	(-0.4,0.2) -- (0.4,0.2);
\node at (0,0.75) {\small $\nu$};

\end{scope}
}


\foreach \a in {0,1}
{
\draw[xshift=4*\a cm]
	(-1.1,-0.3) to[out=-60,in=180] (0,-1) to[out=0,in=240] (1.1,-0.3);
\draw[shift={(4*\a cm,-2.2cm)}]
	(-1.3,-0.3) to[out=240,in=180] (0,-1) to[out=0,in=-60] (1.3,-0.3);
\draw[shift={(4*\a cm,-4.4cm)}]
	(-1.3,-0.3) to[out=240,in=180] (0,-1) to[out=0,in=240] (1.1,-0.3);
}


\foreach \b in {0,1,2}
{
\node at (-1.75,-2.2*\b cm) {\scriptsize $y_j=x_i$};
\node at (1.45,-2.2*\b cm) {\scriptsize $x_j$};
}


\foreach \b in {0,1,2}
{
\fill (4,-1cm -2.2*\b cm) circle (0.05);
\node at (4,-0.8 cm -2.2*\b cm) {\scriptsize $y_i=y_j$};
}


\foreach \a in {2,3}
\foreach \b in {0,1,2}
\draw[shift={(4*\a cm,-2.2*\b cm)}]
	(-1.1,-0.3) -- (-0.7,-0.3)
	(1.1,-0.3) -- (0.7,-0.3)
	(-1.3,-0.3) -- (-1.6,-0.3)
	(1.3,-0.3) -- (1.6,-0.3);
	
\foreach \a in {2,3}
{
\draw[xshift=4*\a cm] 
	(-1.1,-0.3) to[out=-90,in=180] (0,-1) to[out=0,in=-90] (1.1,-0.3);
\draw[shift={(4*\a cm,-2.2cm)}]
	(-1.3,-0.3) to[out=-90,in=180] (0,-1) to[out=0,in=-90] (1.3,-0.3);
\draw[shift={(4*\a cm,-4.4cm)}]
	(-1.3,-0.3) to[out=-90,in=180] (0,-1) to[out=0,in=-90] (1.1,-0.3);
}


\node at (6.4,-0.5) {\scriptsize $z_j=y_i$};
\node at (9.3,-0.5) {\scriptsize $y_j$};

\node at (6.2,-2.7) {\scriptsize $z_j=y_i$};
\node at (9.5,-2.7) {\scriptsize $y_j$};

\node at (6.2,-4.9) {\scriptsize $z_j=y_i$};
\node at (9.3,-4.9) {\scriptsize $y_j$};


\foreach \b in {0,1,2}
{
\fill (12,-1cm -2.2*\b cm) circle (0.05);
\node at (12,-0.8 cm -2.2*\b cm) {\scriptsize $z_i=z_j$};
}

\end{tikzpicture}
\caption{Identifications among $x_*,y_*,z_*$.}
\label{x=y=z}
\end{figure}

If all $x_*,y_*$ are distinct, then we may apply Lemma \ref{sequence} to construct all the tiles at the vertex $\nu$, as given by Figure \ref{map5}. Each tile has $\nu$, two $x_*$ and two $y_*$ as the five vertices. Now each degree $3$ vertex $y_*$ is connected to one $x_*$, another $y_*$, and a new vertex $z_*$. Since the distance between $z_*$ and $\nu$ are $\le 3$, $z_*$ have degree $3$. If $y_i$ and $z_j$ are identified, then we get three possibilities in the third column of Figure \ref{x=y=z}. If two $z_*$ are identified, say $z_i=z_j$, then the identified vertex is a dot in the middle of the $y_iy_j$-edges in the third column, and we get the fourth column. Moreover, considering the direction of the third edge at the dot, we have six possibilities in total.

We claim that all the configurations in Figure \ref{x=y=z} are impossible, so that all $x_*,y_*,z_*$ are distinct. We redraw the configurations as Figure \ref{xyz}. For example, the configurations in the first row of Figure \ref{x=y=z} are respectively labeled $(3.1)$, $(4.1)$, $(4.2)$, $(5.1)$, $(6.1)$, $(6.2)$. The vertex $\nu$ is indicated by $n$ interior edges at the high degree boundary vertex. 

\begin{figure}[h]
\centering
\begin{tikzpicture}[>=latex,scale=1]



\foreach \b in {0,1,2}
\draw[yshift=-0.3cm-2.6*\b cm]
	(-30:1) -- (90:1) -- (210:1) -- cycle
	(90:1) -- ++(-80:0.3)
	(90:1) -- ++(-90:0.3) node[below=-2] {\small $n$}
	(90:1) -- ++(-100:0.3);


\draw[yshift=-0.3cm]
	(-30:1) -- ++(-30:0.3)
	(210:1) -- ++(210:0.3);
\node at (0,-1.4) {\small $(3.1)$};


\draw[yshift=-2.9cm]
	(-30:1) -- ++(-30:-0.3)
	(210:1) -- ++(210:-0.3);
\node at (0,-4) {\small $(3.2)$};


\draw[yshift=-5.5cm]
	(-30:1) -- ++(-30:0.3)
	(210:1) -- ++(210:-0.3);
\node at (0,-6.6) {\small $(3.3)$};


\foreach \a in {1,2}
\foreach \b in {0,1,2}
\draw[shift={(2.5*\a cm,-2.6*\b cm)}]
	(-0.8,-0.8) rectangle (0.8,0.8)
	(-0.8,0.8) -- ++(-45:0.3) node[below right=-2] {\small $n$}
	(-0.8,0.8) -- ++(-35:0.3)
	(-0.8,0.8) -- ++(-55:0.3);


\draw[xshift=2.5cm]
	(0.8,0.8) -- ++(45:0.3)
	(-0.8,-0.8) -- ++(45:-0.3)
	(0.8,-0.8) -- ++(-45:0.3);
\node at (2.5,-1.4) {\small $(4.1)$};


\draw[xshift=5cm]
	(0.8,0.8) -- ++(45:0.3)
	(-0.8,-0.8) -- ++(45:-0.3)
	(0.8,-0.8) -- ++(-45:-0.3);
\node at (5,-1.4) {\small $(4.2)$};


\draw[shift={(2.5cm,-2.6cm)}]
	(0.8,0.8) -- ++(45:-0.3)
	(-0.8,-0.8) -- ++(45:0.3)
	(0.8,-0.8) -- ++(-45:0.3);
\node at (2.5,-4) {\small $(4.3)$};


\draw[shift={(5cm,-2.6cm)}]
	(0.8,0.8) -- ++(45:-0.3)
	(-0.8,-0.8) -- ++(45:0.3)
	(0.8,-0.8) -- ++(-45:-0.3);
\node at (5,-4) {\small $(4.4)$};


\draw[shift={(2.5cm,-5.2cm)}]
	(0.8,0.8) -- ++(45:0.3)
	(-0.8,-0.8) -- ++(45:0.3)
	(0.8,-0.8) -- ++(-45:0.3);
\node at (2.5,-6.6) {\small $(4.5)$};


\draw[shift={(5cm,-5.2cm)}]
	(0.8,0.8) -- ++(45:0.3)
	(-0.8,-0.8) -- ++(45:0.3)
	(0.8,-0.8) -- ++(-45:-0.3);
\node at (5,-6.6) {\small $(4.6)$};


\foreach \b in {0,1,2}
{
\foreach \c in {0,...,4}
\draw[shift={(7.5 cm,-0.1 cm -2.6*\b cm)},rotate=72*\c]
	(18:0.9) -- (90:0.9);

\draw[shift={(7.5 cm,-2.6*\b cm)}]
	(90:0.8) -- ++(-80:0.3)
	(90:0.8) -- ++(-90:0.3) node[below=-2] {\small $n$}
	(90:0.8) -- ++(-100:0.3);
}


\draw[shift={(7.5 cm,-0.1 cm)}]
	(18:0.9) -- ++(18:0.3)
	(162:0.9) -- ++(162:0.3)
	(234:0.9) -- ++(234:-0.3)
	(-54:0.9) -- ++(-54:-0.3);
\node at (7.5,-1.4) {\small $(5.1)$};


\draw[shift={(7.5 cm,-2.7cm)}]
	(18:0.9) -- ++(18:-0.3)
	(162:0.9) -- ++(162:-0.3)
	(234:0.9) -- ++(234:0.3)
	(-54:0.9) -- ++(-54:0.3);
\node at (7.5,-4) {\small $(5.2)$};


\draw[shift={(7.5 cm,-5.3cm)}]
	(18:0.9) -- ++(18:-0.3)
	(162:0.9) -- ++(162:0.3)
	(234:0.9) -- ++(234:-0.3)
	(-54:0.9) -- ++(-54:0.3);
\node at (7.5,-6.6) {\small $(5.3)$};


\foreach \a in {4,5}
\foreach \b in {0,1,2}
{
\foreach \c in {0,...,5}
\draw[shift={(2.5*\a cm,-2.6*\b cm)},rotate=60*\c]
	(-30:0.9) -- (30:0.9);

\draw[shift={(2.5*\a cm,-2.6*\b cm)}]
	(90:0.9) -- ++(-80:0.3)
	(90:0.9) -- ++(-90:0.3) node[below=-2] {\small $n$}
	(90:0.9) -- ++(-100:0.3);
}


\draw[xshift=10cm]
	(30:0.9) -- ++(30:0.3)
	(150:0.9) -- ++(150:0.3)
	(-30:0.9) -- ++(-30:-0.3)
	(-150:0.9) -- ++(-150:-0.3)
	(-90:0.9) -- ++(-90:0.3);
\node at (10,-1.4) {\small $(6.1)$};


\draw[xshift=12.5cm]
	(30:0.9) -- ++(30:0.3)
	(150:0.9) -- ++(150:0.3)
	(-30:0.9) -- ++(-30:-0.3)
	(-150:0.9) -- ++(-150:-0.3)
	(-90:0.9) -- ++(-90:-0.3);
\node at (12.5,-1.4) {\small $(6.2)$};


\draw[shift={(10cm,-2.6cm)}]
	(30:0.9) -- ++(30:-0.3)
	(150:0.9) -- ++(150:-0.3)
	(-30:0.9) -- ++(-30:0.3)
	(-150:0.9) -- ++(-150:0.3)
	(-90:0.9) -- ++(-90:0.3);
\node at (10,-4) {\small $(6.3)$};


\draw[shift={(12.5cm,-2.6cm)}]
	(30:0.9) -- ++(30:-0.3)
	(150:0.9) -- ++(150:-0.3)
	(-30:0.9) -- ++(-30:0.3)
	(-150:0.9) -- ++(-150:0.3)
	(-90:0.9) -- ++(-90:-0.3);
\node at (12.5,-4) {\small $(6.4)$};


\draw[shift={(10cm,-5.2cm)}]
	(30:0.9) -- ++(30:-0.3)
	(150:0.9) -- ++(150:0.3)
	(-30:0.9) -- ++(-30:0.3)
	(-150:0.9) -- ++(-150:-0.3)
	(-90:0.9) -- ++(-90:0.3);
\node at (10,-6.6) {\small $(6.5)$};
	

\draw[shift={(12.5cm,-5.2cm)}]
	(30:0.9) -- ++(30:-0.3)
	(150:0.9) -- ++(150:0.3)
	(-30:0.9) -- ++(-30:0.3)
	(-150:0.9) -- ++(-150:-0.3)
	(-90:0.9) -- ++(-90:-0.3);
\node at (12.5,-6.6) {\small $(6.6)$};

\end{tikzpicture}
\caption{Configurations from the identifications among $x_*,y_*,z_*$.}
\label{xyz}
\end{figure}
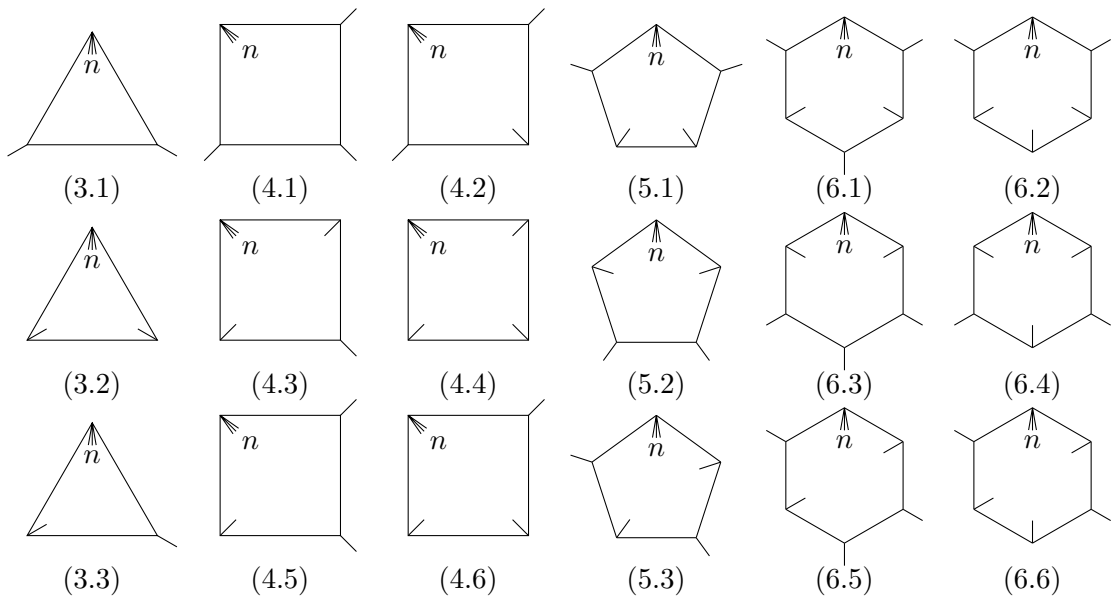

We know the following about the configurations in Figure \ref{xyz}. Any configuration is part of a combinatorial spherical tiling. The boundary has a unique high degree vertex. Although the interior allows high degree vertices, any vertex of distance $\le 3$ from the high degree boundary vertex has degree $3$. Note that although the theorem assumes degree $3$ within distance $4$ of the high degree boundary vertex, the stronger assumption of within distance $3$ is need for proving Theorem \ref{distance4}.  

By Figure \ref{impossible}, (3.1) and (4.1) are impossible. By considering the outside of the configurations, (3.2) and (4.4) are also impossible. Moreover, applying Lemma \ref{sequence} to $(6.3)$ gives $(3.1)$, which we have shown is impossible. Applying the lemma to the outside of $(6.2)$ gives similar contradiction. We conclude that the configurations $(3.1)$, $(3.2)$, $(4.1)$, $(4.4)$, $(6.2)$, $(6.3)$ are all impossible. For the remaining configurations, we follow the argument in the proof of Lemma \ref{cycle}. All the non-generic reductions can be dismissed as before, so that we only need to consider the generic reductions in Figure \ref{xyz_reduce}. A subtle point in constructing the reductions is to make sure that all the newly created vertices are of distance $\le 3$ from the high degree boundary vertex, so that the new vertices have degree $3$.

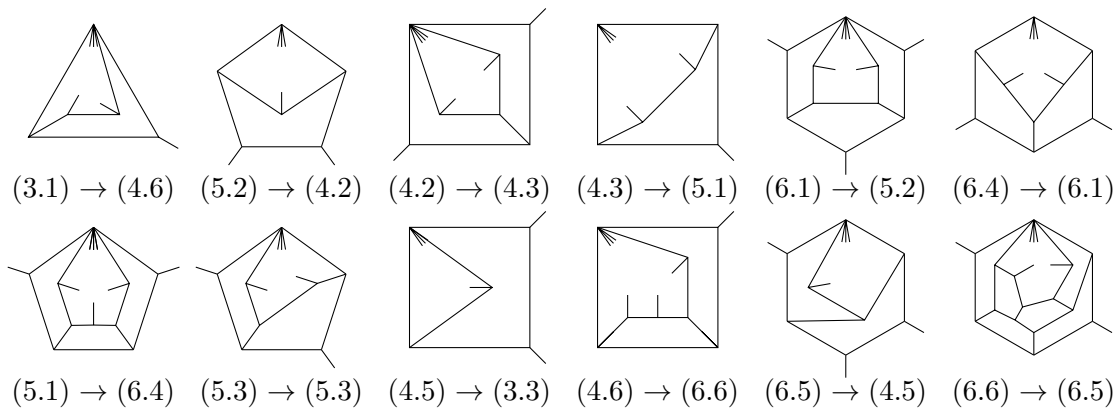
\begin{figure}[h]
\centering
\begin{tikzpicture}[>=latex,scale=1]



\draw[yshift=-0.2cm]
	(-30:1) -- (90:1) -- (210:1) -- cycle
	(90:1) -- ++(-80:0.3)
	(90:1) -- ++(-90:0.3) 
	(90:1) -- ++(-100:0.3)
	(-30:1) -- ++(-30:0.3)
	(90:1) -- (-30:0.4) -- (210:0.4) -- (210:1)
	(-30:0.4) -- ++(150:0.3)
	(210:0.4) -- ++(60:0.3);
\node at (0,-1.4) {\small $(3.1) \to (4.6)$};


\foreach \a / \b in {0/1, 1/0, 1/1}
{
\foreach \c in {0,...,4}
\draw[shift={(2.5*\a cm, -0.1 cm -2.7*\b cm)},rotate=72*\c]
	(18:0.9) -- (90:0.9);

\draw[shift={(2.5*\a cm, -2.7*\b cm)}]
	(90:0.8) -- ++(-80:0.3)
	(90:0.8) -- ++(-90:0.3) 
	(90:0.8) -- ++(-100:0.3);
}


\draw[yshift=-2.8cm]
	(90:0.9) -- ++(-80:0.3)
	(90:0.9) -- ++(-90:0.3) 
	(90:0.9) -- ++(-100:0.3)
	(18:0.9) -- ++(18:0.3)
	(162:0.9) -- ++(162:0.3)
	(90:0.9) -- (18:0.5) -- (-54:0.5) -- (-54:0.9)
	(18:0.5) -- (18:0.2)
	(90:0.9) -- (162:0.5) -- (234:0.5) -- (234:0.9)
	(162:0.5) -- (162:0.2)
	(-54:0.5) -- (-90:0.4) -- (234:0.5)
	(-90:0.4) -- (-90:0.1);
\node at (0,-4.1) {\small $(5.1) \to (6.4)$};


\draw[shift={(2.5cm,-0.1cm)}]
	(234:0.9) -- ++(234:0.3)
	(-54:0.9) -- ++(-54:0.3)
	(18:0.9) -- (-90:0.3) -- (162:0.9)
	(-90:0.3) -- (0,0);
\node at (2.5,-1.4) {\small $(5.2)\to (4.2)$};


\draw[shift={(2.5cm,-2.8cm)}]
	(162:0.9) -- ++(162:0.3)
	(-54:0.9) -- ++(-54:0.3)
	(18:0.9) -- (18:0.5) -- (234:0.5) -- (234:0.9)
	(18:0.5) -- ++(162:0.3)
	(90:0.9) -- (162:0.5) -- (234:0.5)
	(162:0.5) -- ++(162:-0.3);
\node at (2.5,-4.1) {\small $(5.3)\to (5.3)$};


\foreach \a in {2,3}
\foreach \b in {0,1}
\draw[shift={(2.5*\a cm,-2.7*\b cm)}]
	(-0.8,-0.8) rectangle (0.8,0.8)
	(-0.8,0.8) -- ++(-45:0.3)
	(-0.8,0.8) -- ++(-35:0.3)
	(-0.8,0.8) -- ++(-55:0.3);


\draw[xshift=5cm]
	(0.8,0.8) -- ++(45:0.3)
	(-0.8,-0.8) -- ++(45:-0.3)
	(0.8,-0.8) -- (0.4,-0.4)
	(-0.8,0.8) -- (0.4,0.4) -- (0.4,-0.4)
	(-0.8,0.8) -- (-0.4,-0.4) -- (0.4,-0.4)
	(0.4,0.4) -- ++(45:-0.3)
	(-0.4,-0.4) -- ++(45:0.3);
\node at (5,-1.4) {\small $(4.2) \to (4.3)$};


\draw[xshift=7.5cm]
	(0.8,-0.8) -- ++(-45:0.3)
	(0.8,0.8) -- (0.5,0.2) -- (-0.2,-0.5) -- (-0.8,-0.8) 
	(0.5,0.2) -- ++(-45:-0.3)
	(-0.2,-0.5) -- ++(-45:-0.3);
\node at (7.5,-1.4) {\small $(4.3) \to (5.1)$};


\draw[shift={(5cm,-2.7cm)}]
	(0.8,0.8) -- ++(45:0.3)
	(0.8,-0.8) -- ++(-45:0.3)
	(-0.8,-0.8) -- (0.3,0) -- (-0.8,0.8)
	(0.3,0) -- (0,0);
\node at (5,-4.1) {\small $(4.5) \to (3.3)$};
	

\draw[shift={(7.5cm,-2.7cm)}]
	(0.8,0.8) -- ++(45:0.3)
	(-0.8,-0.8) -- ++(45:0.3)
	(0.8,-0.8) -- ++(-45:-0.3)
	(0.8,-0.8) -- (0.4,-0.4) -- (-0.4,-0.4) -- (-0.8,-0.8)
	(0,-0.4) -- ++(0,0.3)
	(-0.4,-0.4) -- ++(0,0.3)
	(-0.8,0.8) -- (0.4,0.4) -- (0.4,-0.4)
	(0.4,0.4) -- ++(45:-0.3);
\node at (7.5,-4.1) {\small $(4.6) \to (6.6)$};


\foreach \a in {4,5}
\foreach \b in {0,1}
{
\foreach \c in {0,...,5}
\draw[shift={(2.5*\a cm,-2.7*\b cm)},rotate=60*\c]
	(-30:0.9) -- (30:0.9);

\draw[shift={(2.5*\a cm,-2.7*\b cm)}]
	(90:0.9) -- ++(-80:0.3)
	(90:0.9) -- ++(-90:0.3) 
	(90:0.9) -- ++(-100:0.3);
}


\draw[xshift=10cm]
	(30:0.9) -- ++(30:0.3)
	(150:0.9) -- ++(150:0.3)
	(-90:0.9) -- ++(-90:0.3)
	(90:0.9) -- (30:0.5) -- (-30:0.5) -- (-30:0.9)
	(90:0.9) -- (150:0.5) -- (210:0.5) -- (210:0.9)
	(-30:0.5) -- (210:0.5)
	(30:0.5) -- ++(190:0.3)
	(150:0.5) -- ++(-10:0.3);
\node at (10,-1.4) {\small $(6.1)\to (5.2)$};


\draw[xshift=12.5cm]
	(-30:0.9) -- ++(-30:0.3)
	(-150:0.9) -- ++(-150:0.3)
	(-90:0.9) -- (-90:0.5)
	(150:0.9) -- (180:0.4) -- (-90:0.5) -- (0:0.4) -- (30:0.9)
	(0:0.4) -- ++(150:0.3)
	(180:0.4) -- ++(30:0.3);
\node at (12.5,-1.4) {\small $(6.4)\to (6.1)$};


\draw[shift={(10cm,-2.7cm)}]
	(150:0.9) -- ++(150:0.3)
	(-30:0.9) -- ++(-30:0.3)
	(-90:0.9) -- ++(-90:0.3)
	(30:0.9) -- (-60:0.5) -- (210:0.9)
	(90:0.9) -- (180:0.5) -- (-60:0.5)
	(180:0.5) -- ++(10:0.3);
\node at (10,-4.1) {\small $(6.5)\to (4.5)$};
	

\draw[shift={(12.5cm,-2.7cm)}]
	(150:0.9) -- ++(150:0.3)
	(-30:0.9) -- ++(-30:0.3)
	(30:0.9) -- (-30:0.6) -- (-90:0.6) -- (-90:0.9)
	(90:0.9) -- (150:0.6) -- (210:0.6) -- (210:0.9)
	(210:0.6) -- (240:0.5) -- (-90:0.6)
	(240:0.5) -- (240:0.3)
	(150:0.6) -- (150:0.3) -- (240:0.3)
	(150:0.3) -- ++(30:0.3)
	(-30:0.6) -- (-30:0.3)
	(90:0.9) -- (30:0.6) -- (-30:0.3) -- (240:0.3)
	(30:0.6) -- ++(180:0.3);
\node at (12.5,-4.1) {\small $(6.6)\to (6.5)$};

\end{tikzpicture}
\caption{Generic reductions of configurations in Figure \ref{xyz}.}
\label{xyz_reduce}
\end{figure}

We also need to consider the configurations with small $n$ so that the generic reductions cannot be carried out. By Figure \ref{impossible}, the configurations $(3.1)_{n=0}$, $(4.2)_{n=0}$, $(4.5)_{n=0}$ are impossible. The reductions of the remaining special configurations are given by Figure \ref{xyz_special}.

\begin{figure}[h]
\centering
\begin{tikzpicture}[>=latex,scale=1]


\foreach \a in {0,1}
{
\draw[xshift=2.5*\a cm]
	(-0.8,-0.8) rectangle (0.8,0.8);
\node at (-0.9 cm+2.5*\a cm,0.9) {\small $\nu$};
}


\draw
	(0.8,0.8) -- ++(45:0.3)
	(-0.8,-0.8) -- ++(45:-0.3)
	(0.8,-0.8) -- (0.4,-0.4)
	(-0.8,0.8) -- (-0.4,0.4)
	(0,0) circle (0.566);
\node at (0,0) {\small $?$};
\node at (0,-1.2) {\small $(4.2)_{n=1}$};


\draw[xshift=2.5cm]
	(0.8,0.8) -- ++(45:0.3)
	(-0.8,-0.8) -- (0,0) -- (0.8,-0.8)
	(0,0) -- (0,-0.3);
\node at (2.5,-0.5) {\small $?$};
\node at (2.5,-1.2) {\small $(4.6)_{n=0}$};


\foreach \a in {2,3,4}
{
\foreach \c in {0,...,4}
\draw[shift={(2.5*\a cm, -0.1 cm)},rotate=72*\c]
	(18:0.9) -- (90:0.9);
\node at (2.5*\a cm,0.95) {\small $\nu$};
}


\draw[shift={(5cm,-0.1cm)}]
	(18:0.9) -- ++(18:0.3)
	(162:0.9) -- ++(162:0.3)
	(234:0.9) to[out=54,in=180] (0,-0.2) to[out=0,in=126] (-54:0.9);
\node at (5,-0.6) {\small $?$};
\node at (5,-1.2) {\small $(5.1)_{n=0}$};

\draw[shift={(7.5cm,-0.1cm)}]
	(18:0.9) -- ++(18:0.3)
	(162:0.9) -- ++(162:0.3)
	(90:0.9) -- (90:0.5) 
	(234:0.9) -- (234:0.5) -- (-90:0.4) -- (-54:0.5) -- (-54:0.9)
	(-90:0.4) -- (-90:0.1)
	(-54:0.5) arc (-54:234:0.5);
\node at (7.5,0.1) {\small $?$};
\node at (7.5,-1.2) {\small $(5.1)_{n=1}$};


\draw[shift={(10cm,-0.1cm)}]
	(162:0.9) -- ++(162:0.3)
	(-54:0.9) -- ++(-54:0.3)
	(18:0.9) -- (126:0.3) -- (234:0.9)
	(126:0.3) -- (0,0);
\node at (10.25,-0.4) {\small $?$};
\node at (10,-1.2) {\small $(5.3)_{n=0}$};


\begin{scope}[yshift=-2.8cm]

\foreach \a in {0,...,4}
{
\foreach \c in {0,...,5}
\draw[xshift=2.5*\a cm,rotate=60*\c]
	(-30:0.9) -- (30:0.9);
\node at (2.5*\a cm,1.05) {\small $\nu$};
}


\draw
	(30:0.9) -- ++(30:0.3)
	(150:0.9) -- ++(150:0.3)
	(-90:0.9) -- ++(-90:0.3)
	(210:0.9) to[out=30,in=150] (-30:0.9);
\node at (0,-1.5) {\small $(6.1)_{n=0}$};
\node at (0,-0.5) {\small $?$};

\draw[xshift=2.5cm]
	(-90:0.9) -- ++(-90:0.3)
	(30:0.9) -- ++(30:0.3)
	(150:0.9) -- ++(150:0.3)
	(90:0.9) -- (90:0.5)
	(-30:0.9) -- (-30:0.5)
	(-150:0.9) -- (-150:0.5)
	(0,0) circle (0.5);
\node at (2.5,-1.5) {\small $(6.1)_{n=1}$};
\node at (2.5,0) {\small $?$};


\draw[xshift=5cm]
	(150:0.9) -- ++(150:0.3)
	(-30:0.9) -- ++(-30:0.3)
	(-90:0.9) -- ++(-90:0.3)
	(30:0.9) -- (-60:0.5) -- (210:0.9)
	(-60:0.5) -- (-60:0.2);
\node at (5,-1.5) {\small $(6.5)_{n=0}$};
\node at (4.9,0.2) {\small $?$};
	

\draw[xshift=7.5cm]
	(150:0.9) -- ++(150:0.3)
	(-30:0.9) -- ++(-30:0.3)
	(30:0.9) -- (120:0.5) -- (210:0.9)
	(120:0.5) -- (0,0) -- (-90:0.9)
	(0,0) -- ++(30:-0.3);
\node at (7.5,-1.5) {\small $(6.6)_{n=0}$};
\node at (7.2,-0.4) {\small $?$};

\draw[xshift=10cm]
	(150:0.9) -- ++(150:0.3)
	(-30:0.9) -- ++(-30:0.3)
	(30:0.9) -- (30:0.5) -- (60:0.4) -- (90:0.5) -- (90:0.9)
	(210:0.9) -- (210:0.5) -- (240:0.4) -- (-90:0.5) -- (-90:0.9)
	(90:0.5) arc (90:210:0.5)
	(30:0.5) arc (30:-90:0.5)
	(60:0.4) -- (-30:0.2) -- (240:0.4)
	(-30:0.2) -- (150:0.1);
\node at (10,-1.5) {\small $(6.6)_{n=0}$};
\node at (9.8,0.1) {\small $?$};

\end{scope}

\end{tikzpicture}
\caption{Special reductions of configurations in Figure \ref{xyz}.}
\label{xyz_special}
\end{figure}
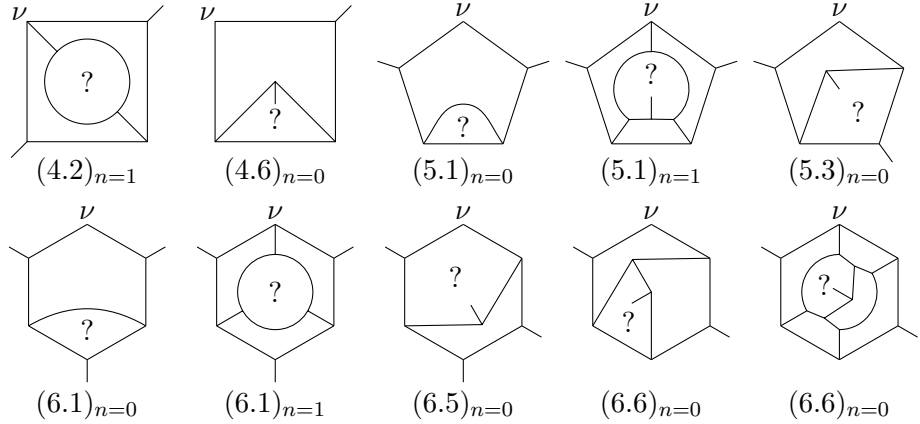

After proving that $x_*,y_*,z_*$ are all distinct, we may apply Lemma \ref{sequence} to construct all the second layer tiles, each having one $x_*$, two $y_*$ and two $z_*$ as the five vertices. The two layers of tiles are given by Figure \ref{map5}. 

Each degree $3$ vertex $z_*$ is connected to one $y_*$, another $z_*$, and a new vertex $w_*$. Since the distance between $w_*$ and $\nu$ are $\le 4$, $w_*$ have degree $3$. Applying Lemma \ref{sequence} to the boundary of the two layers, we find tiles with two $y_*$, two $z_*$ and one $w_*$ as the five vertices. For topological reason, $w_*$ cannot be identified with $x_*$. Since $y_*$ have degree $3$, $w_*$ cannot be identified with $y_*$.  If $z_i$ and $w_j$ are identified, then we get three possibilities in Figure \ref{zw}. By Figure \ref{impossible}, the second and third are impossible. Since the outside of the first is the second, the first is also impossible. Therefore $w_*$ cannot be identified with $z_*$.

\begin{figure}[htp]
\centering
\begin{tikzpicture}[>=latex,scale=1]

\foreach \a in {0,1,2}
{
\begin{scope}[xshift=5*\a cm]

\foreach \x in {-2,-1,1,2}
\draw[xshift=0.8*\x cm]
	(0,0.6) -- (0,0)
	(0,0) -- (0.2,-0.3) -- (0.2,-0.7) -- (-0.2,-0.7)
	(0,0) -- (-0.2,-0.3) -- (-0.2,-0.7);

\draw
	(-1.4,-0.3) -- (-1,-0.3)
	(1.4,-0.3) -- (1,-0.3)
	(-1.8,-0.3) -- (-2,-0.3)
	(1.8,-0.3) -- (2,-0.3);

\draw[dotted]
	(-0.4,0.2) -- (0.4,0.2);
\node at (0,0.75) {\small $\nu$};

\end{scope}
}


\draw 
	(-0.6,-0.7) -- ++(-60:0.2)
	(0.6,-0.7) -- ++(60:-0.2)
	(-1,-0.7) to[out=240,in=180] (0,-1.6) to[out=0,in=-60] (1,-0.7);
\node at (-1.6,-0.9) {\scriptsize $w_j=z_i$};
\node at (1.25,-0.9) {\scriptsize $z_j$};


\begin{scope}[xshift=5cm]

\draw
	(-0.6,-0.7) -- ++(-60:0.2)
	(0.6,-0.7) -- ++(60:-0.2)
	(-1,-0.7) -- ++(240:0.2)
	(1,-0.7) -- ++(-60:0.2)
	(-1.4,-0.7) to[out=-60,in=180] (0,-1.6) to[out=0,in=240] (1.4,-0.7);
\node at (-1.85,-0.9) {\scriptsize $w_j=z_i$};
\node at (1.5,-0.9) {\scriptsize $z_j$};	
\node at (0,-1.3) {\small ?};

\end{scope}


\begin{scope}[xshift=10cm]

\draw
	(-0.6,-0.7) -- ++(-60:0.2)
	(0.6,-0.7) -- ++(60:-0.2)
	(-1,-0.7) -- ++(60:-0.2)
	(1,-0.7) -- ++(-60:0.2)
	(-1.4,-0.7) -- ++(-60:0.2)
	(-1.8,-0.7) to[out=240,in=180] (-0.2,-1.6) to[out=0,in=240] (1.4,-0.7);
\node at (-2.4,-0.9) {\scriptsize $w_j=z_i$};
\node at (1.5,-0.9) {\scriptsize $z_j$};	
\node at (-0.2,-1.3) {\small ?};

\end{scope}

\end{tikzpicture}
\caption{Three cases of $w_i=z_j$.}
\label{zw}
\end{figure}
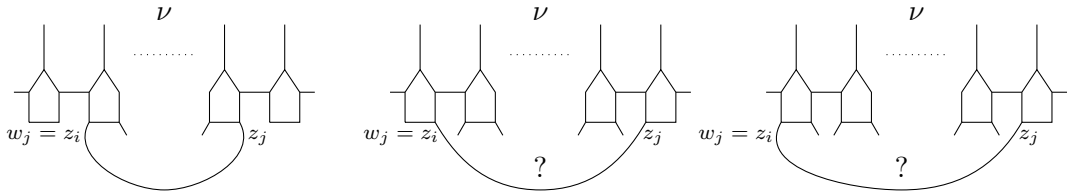

Finally, each $w_*$ is connected to two $z_*$. For $i\ne j$, $w_i$ and $w_j$ are connected to four distinct $z_*$. Since $w_i$ and $w_j$ have degree $3$, they cannot be identified. So we conclude that all $x_*,y_*,z_*,w_*$ are distinct. Then we may apply Lemma \ref{sequence} to construct the next layer of tiles. By Lemma \ref{sequence}, we find that the third edges at $w_*$ converge at the same vertex. Therefore we get the earth map tiling in Figure \ref{map5}.
\end{proof}

Theorem \ref{distance5} describes the case of maximal distance $5$ between any vertex of degree $>3$ and the neighboring vertices of degree $>3$. The following is the next extreme case.

\begin{theorem}\label{distance4}
Suppose the distances between vertices of degree $>3$ in a combinatorial pentagonal tiling are always $\ge 4$, then the tiling is either the earth map tiling in Figure \ref{map5}, or the earth map tiling in Figure \ref{map4}.
\end{theorem}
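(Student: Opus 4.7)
The strategy is to reduce to Theorem \ref{distance5} when possible, and to adapt its layered construction otherwise. If some vertex $\nu$ of degree $>3$ has all vertices within distance $4$ of degree $3$, Theorem \ref{distance5} directly yields the earth map tiling of Figure \ref{map5}. In the remaining case, every vertex of degree $>3$ has another such vertex at distance exactly $4$; we aim to show the tiling is the earth map tiling of Figure \ref{map4}.

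Fix $\nu$ of degree $k>3$ and a partner $\sigma$ of degree $>3$ at distance $4$ from $\nu$. Build the first two layers of tiles around $\nu$ exactly as in the proof of Theorem \ref{distance5}: the vertex layers $x_*, y_*, z_*$ at distances $1, 2, 3$ and the corresponding pentagonal tiles. The portion of that proof establishing the distinctness of $x_*, y_*, z_*$ uses only the degree-$3$ property of vertices within distance $3$ of $\nu$, which still holds, so it applies verbatim. The arguments excluding the identifications of the distance-$4$ vertices $w_i$ (defined as the unique outward neighbor of each $z_i$) with $x_*, y_*, z_*$ are either topological (as in Figure \ref{zw}) or rely only on the degree-$3$ property of $x_*, y_*, z_*$, so they carry over unchanged.

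The new ingredient is the analysis of identifications among the $w_i$'s. Every vertex at distance exactly $4$ from $\nu$ is adjacent to some $z_i$ and hence equals some $w_i$; in particular, $\sigma$ is some $w_i$. The heart of the proof is to show that all $w_i$ are identified with $\sigma$, which collapses the would-be fourth-layer tiles of the distance-$5$ construction so that $\sigma$ becomes a direct common vertex of the tiles surrounding the $z$-cycle, yielding the structure of Figure \ref{map4}. The argument uses Lemma \ref{sequence} to propagate tile-boundary configurations along the $w$-cycle: since consecutive $w_i, w_{i+1}$ share a pentagonal tile together with $\sigma$ in the distance-$5$ structure, and since two distinct vertices of degree $>3$ must be at distance $\ge 4$, any mixed configuration where some $w_i = \sigma$ while others are distinct degree-$3$ vertices produces either a pentagon with a repeated vertex or an edge incident to only one tile, both impossible. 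I expect the main obstacle to be the thorough local case analysis necessary to rule out these mixed configurations; this parallels the reductions in Figures \ref{xyz_reduce} and \ref{xyz_special} and the propagation techniques of Lemma \ref{cycle}.
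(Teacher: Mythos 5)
Your first two paragraphs track the paper's proof exactly: the paper likewise observes that everything in the proof of Theorem \ref{distance5} up to the construction of the $w_*$ (distinctness of $x_*,y_*,z_*$ and non-identification of $w_*$ with them) only needs degree $3$ within distance $3$, hence survives under the weaker hypothesis, and that the remaining case is that some $w_i$ has degree $>3$, which becomes the south pole $\sigma$.

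The third paragraph, however, aims at the wrong target. It is \emph{not} true that all $w_*$ are identified with $\sigma$ in the distance-$4$ earth map tiling: in Figure \ref{map4} only every third $w$ equals $\sigma$, the two intervening vertices $w_{i+1},w_{i+2}$ are ordinary degree-$3$ vertices distinct from $\sigma$, and there is an additional vertex $u$ (at distance $1$ from $\sigma$ and distance $5$ from $\nu$) in each timezone. So the ``mixed configuration'' you propose to rule out --- some $w_i=\sigma$ while neighboring $w_*$ are distinct degree-$3$ vertices --- is precisely the configuration that occurs in the genuine tiling; any argument deriving a contradiction from it would also refute Figure \ref{map4}. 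The paper's actual mechanism is different: once $\sigma=w_i$ has degree $>3$, the hypothesis that high-degree vertices are at distance $\ge 4$ forces every vertex within distance $3$ of $\sigma$ (in particular $w_{i+1}$, $u$, $w_{i+2}$) to have degree $3$; Lemma \ref{sequence} then forces a generic $6$-tile pattern (tiles $1$--$6$ in Figure \ref{map4}) that repeats around $\sigma$, and a final closing-up argument (Figure \ref{w1}) shows the pattern must match perfectly when the edges at $\sigma$ are exhausted. Your proposal is missing both the correct local structure around $\sigma$ and this closing-up step, so as written it cannot be completed.
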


\begin{figure}[htp]
\centering
\begin{tikzpicture}[>=latex,scale=1]

\begin{scope}[rotate=10]
{
\foreach \x in {0,...,17}
\draw[rotate=20*\x]
		(0,0) -- (0:1)  -- (5:1.5) -- (15:1.5) -- (20:1)
		(5:1.5) -- (5:2) -- (-5:2) -- (-5:1.5);
		
\foreach \x in {0,...,5}
\draw[rotate=60*\x]
		(5:2) -- (10:2.3) -- (15:2)
		(25:2) -- (30:2.3) -- (35:2)
		(10:2.3) -- (20:2.6) -- (30:2.3)
		(20:2.6) -- (20:3)
		(-5:2) -- (-5:3)
		(45:2) -- (45:3);
}
\end{scope}

\begin{scope}[xshift=4cm]
{
\fill[gray!30]
	(-0.2,0.4) rectangle (0.2,-1.2);

\foreach \x in {0,...,6}
\draw[xshift=0.8*\x cm]
	(-0.2,0.4) -- (-0.2,0) -- (-0.4,0)
	(0.2,0.4) -- (0.2,0) -- (0.4,0)
	(-0.4,0.8) -- (-0.2,0.4) -- (0.2,0.4) -- (0.4,0.8)
	(0.4,0.8) -- (0.4,1.3);

\foreach \x in {0,1}
\draw[xshift=2.4*\x cm]
	(-0.2,0) -- (-0.2,-1.2)
	(0.2,0) -- (0.2,-1.2)
	(0.6,0) -- (1,-0.4) -- (1.2,-0.8) -- (1.4,-0.4) -- (1.8,0)
	(1,-0.4) -- (1,0) 
	(1.4,-0.4) -- (1.4,0)
	(1.2,-0.8) -- (1.2,-1.2);

\draw
	(-0.4,0.8) -- (-0.4,1.3)
	(4.6,0) -- (4.6,-1.2)
	(5,0) -- (5,-1.2);

\node at (2.4,1.5) {\small $\nu$};
\node at (0.3,-1.4) {\scriptsize $w_i=\sigma$};
\node at (0.65,-0.45) {\scriptsize $w_{i+1}$};
\node at (1.8,-0.45) {\scriptsize $w_{i+2}$};
\node at (1.33,-0.8) {\scriptsize $u$};
\node at (0.6,-0.9) {\scriptsize 1};
\node at (0.8,0.1) {\scriptsize 2};
\node at (1.2,-0.3) {\scriptsize 3};
\node at (1.6,0.1) {\scriptsize 4};
\node at (1.8,-0.9) {\scriptsize 5};
\node at (2.4,-0.6) {\scriptsize 6};
}
\end{scope}

\end{tikzpicture}
\caption{Earth map tiling, distance between poles $=4$.}
\label{map4}
\end{figure}
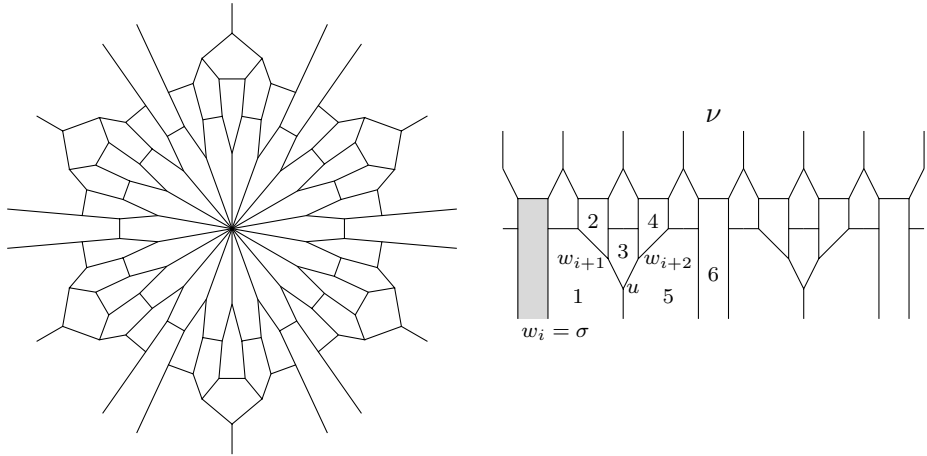

\begin{proof}
Theorem \ref{distance5} covers the case that the distances between vertices of degree $>3$ are always $\ge 5$. Moreover, under the assumption of the current theorem, the whole proof of Theorem \ref{distance5} except the last paragraph is still valid. We may still conclude that $x_*,y_*,z_*$ are all distinct, and $w_*$ is not identified with any of $x_*,y_*,z_*$. In fact, if we also know that all $w_*$ are distinct, then the tiling must be the earth map tiling in Figure \ref{map5}.

Now we assume some $w_i$ has degree $>3$. On the right of Figure \ref{map4}, the shaded region is one tile with $w_i$ as a vertex. We ``open up'' the tile at $w_i$ because $w_i$ will be the south pole $\sigma$. We assume that there are plenty of edges at $w_i=\sigma$.

We may construct the tile 1 by using Lemma \ref{sequence}. The tile makes use of the next $w$-vertex $w_{i+1}$ and creates another new vertex $u$. Since the distance between $w_{i+1}$, $u$ and $\sigma$ is $\le 2$, and the degree of $\sigma$ is $>3$, both $w_{i+1}$ and $u$ have degree $3$. Like the proof of Theorem \ref{distance5}, this implies $w_{i+1}$ cannot be identified with any other $w_*$ (because otherwise $w_{i+1}$ would be connected to four distinct $z_*$). If $u=z_j$, then $w_{i+1}$ is identified with another $w_*$, a contradiction. If $u=w_j$, then by the degree $3$ property of $w_{i+1}$ and $u$, we have the first picture in Figure \ref{w1}, which is impossible by Figure \ref{impossible}. Therefore the construction of the tile 1 must be generic, as illustrated in Figure \ref{map4}.

After the tile 1, we immediately have the tiles 2 and 3 by Lemma \ref{sequence}. Then the distance between $w_{i+2}$ and $\sigma$ is $2$. By the same reason as before, the degree of $w_{i+2}$ must be $3$, and the construction of the tiles must be generic. Then by Lemma \ref{sequence}, we further get the tiles 4, 5 and 6. 

Now the tile 6 is in the same configuration as the shaded tile we started with. Therefore the process repeats itself. The process stops when the edges at $\sigma$ are used up. This means that either we come back to the shaded tile in a perfect match, which gives the earth map tiling, or there are not enough available edges at $\sigma$ to complete the last ``gap''. The second and third pictures in Figure \ref{w1} show what happens when there are not enough available edges. Both are impossible by Figure \ref{impossible}.
\end{proof}

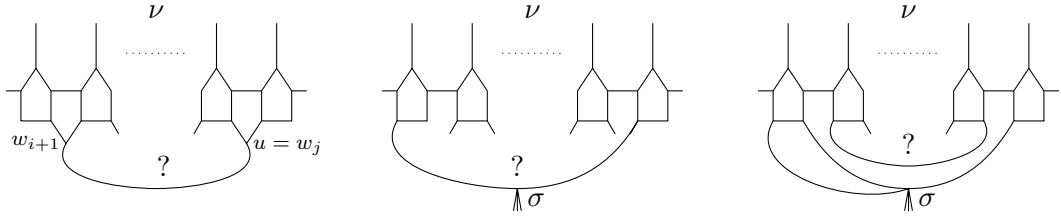
\begin{figure}[htp]
\centering
\begin{tikzpicture}[>=latex,scale=1]

\foreach \a in {0,1,2}
{
\begin{scope}[xshift=5*\a cm]

\foreach \x in {-2,-1,1,2}
\draw[xshift=0.8*\x cm]
	(0,0.6) -- (0,0)
	(0,0) -- (0.2,-0.3) -- (0.2,-0.7) -- (-0.2,-0.7)
	(0,0) -- (-0.2,-0.3) -- (-0.2,-0.7);

\draw
	(-1.4,-0.3) -- (-1,-0.3)
	(1.4,-0.3) -- (1,-0.3)
	(-1.8,-0.3) -- (-2,-0.3)
	(1.8,-0.3) -- (2,-0.3);

\draw[dotted]
	(-0.4,0.2) -- (0.4,0.2);
\node at (0,0.75) {\small $\nu$};

\end{scope}
}


\draw 
	(-0.6,-0.7) -- ++(-60:0.2)
	(0.6,-0.7) -- ++(60:-0.2)
	(-1,-0.7) -- (-1.2,-1) -- (-1.4,-0.7)
	(1,-0.7) -- (1.2,-1) -- (1.4,-0.7)
	(-1.2,-1) to[out=240,in=180] (0,-1.6) to[out=0,in=-60] (1.2,-1);
\node at (-1.6,-1) {\scriptsize $w_{i+1}$};
\node at (1.75,-1.05) {\scriptsize $u=w_j$};
\node at (0.1,-1.3) {\small ?};


\begin{scope}[xshift=5cm]

\draw
	(-0.6,-0.7) -- ++(-60:0.2)
	(0.6,-0.7) -- ++(60:-0.2)
	(-1,-0.7) -- ++(60:-0.2)
	(1,-0.7) -- ++(-60:0.2)
	(1.4,-0.7) -- ++(60:-0.2)
	(-0.2,-1.6) -- ++(-80:0.3)
	(-0.2,-1.6) -- ++(-90:0.3) 
	(0-.2,-1.6) -- ++(-100:0.3)
	(-1.8,-0.7) to[out=240,in=180] (-0.2,-1.6) to[out=0,in=240] (1.4,-0.7);	
\node at (-0.2,-1.3) {\small ?};
\node at (0.05,-1.75) {\small $\sigma$};

\end{scope}


\begin{scope}[xshift=10cm]

\draw
	(-0.6,-0.7) -- ++(-60:0.2)
	(0.6,-0.7) -- ++(60:-0.2)
	(0,-1.6) -- ++(-80:0.3)
	(0,-1.6) -- ++(-90:0.3) 
	(0,-1.6) -- ++(-100:0.3)
	(-1.4,-0.7) to[out=-60,in=180] (0,-1.6) to[out=0,in=240] (1.4,-0.7)
	(-1,-0.7) to[out=240,in=180] (0,-1.3) to[out=0,in=-60] (1,-0.7)
	(-1.8,-0.7) to[out=240,in=200] (0,-1.6);
\node at (0,-1) {\small ?};
\node at (0.25,-1.75) {\small $\sigma$};

\end{scope}

\end{tikzpicture}
\caption{The new tiles are generic; and what happens when there are not enough available edges at $\sigma$.}
\label{w1}
\end{figure}

\section{Two Vertices of Degree $>3$}

Theorems \ref{distance5} and \ref{distance4} cannot be further extended to smaller distances between vertices of degree $>3$. The connected sum construction in the introduction produces many vertices of degree $>3$ within distance $1$. Figure \ref{d3} (with four outward rays converging at the same vertex) is a combinatorial pentagonal tiling of the sphere with three vertices of degree $4$ and eighteen tiles, such that the distance between any two vertices of degree $4$ is $3$.

\begin{figure}[htp]
\centering
\begin{tikzpicture}[>=latex,scale=1]

\foreach \x in {1,-1}
\foreach \y in {1,-1}
\draw[xscale=\x,yscale=\y]
	(0,1.1) -- (0.4,1.1) -- (1.2,1.2) -- (1.1,0.4) -- (1.1,0)
	(0,0.6) -- (0.4,0.8) -- (0.8,0.4) -- (0.6,0)
	(0.4,0.8) -- (0.4,1.1)
	(0.8,0.4) -- (1.1,0.4)
	(1.2,1.2) -- (1.5,1.5);

\foreach \x in {1,-1}
\draw[scale=\x]
	(0.4,0.8) -- (0.2,0.2) -- (0,0)
	(0,-0.6) -- (0.3,-0.1)
	(0.2,0.2) -- (0.3,-0.1) -- (0.6,0);

\end{tikzpicture}
\caption{Three vertices of degree $4$ and eighteen tiles.}
\label{d3}
\end{figure}
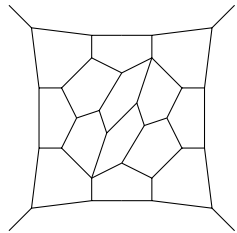

In view of Theorem \ref{1vertex}, therefore, we may study combinatorial pentagonal tilings of the sphere with exactly two vertices of degree $>3$. We expect to get similar earth map tilings with the two high degree vertices as the poles, even when the distance between the two poles is $\le 3$. We call the shortest paths connecting the poles \emph{meridians}. If the meridians have length $5$ or $4$, then the tilings have to be the earth map tilings in Figures \ref{map5} and \ref{map4}. So we only need to consider meridians of length $\le 3$. Up to the horizontal and vertical flippings, Figure \ref{allcase2} gives all the possibilities.

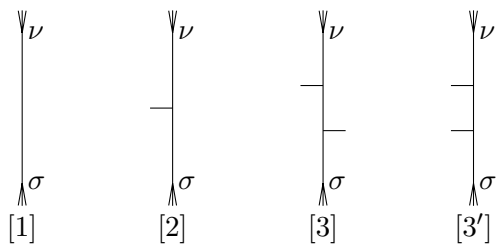
\begin{figure}[h]
\centering
\begin{tikzpicture}[>=latex,scale=1]

\foreach \x in {0,...,3}
\draw[xshift=2*\x cm]
	(0,-1) node[right=-2] {\small $\sigma$} -- (0,1) node[right=-2] {\small $\nu$}
	(0,1) -- ++(90:0.3)
	(0,1) -- ++(80:0.3) 
	(0,1) -- ++(100:0.3)
	(0,-1) -- ++(90:-0.3)
	(0,-1) -- ++(80:-0.3) 
	(0,-1) -- ++(100:-0.3);

\node at (0,-1.6) {\small [1]};

\draw[xshift=2cm] 
	(0,0) -- (-0.3,0);
\node[xshift=2cm] at (0,-1.6) {\small $[2]$};

\draw[xshift=4cm] 
	(0,0.3) -- (-0.3,0.3) 
	(0,-0.3) -- (0.3,-0.3);
\node[xshift=4cm] at (0,-1.6) {\small $[3]$};

\draw[xshift=6cm] 
	(0,0.3) -- (-0.3,0.3) 
	(0,-0.3) -- (-0.3,-0.3);
\node[xshift=6cm] at (0,-1.6) {\small $[3']$};

\end{tikzpicture}
\caption{Possible meridians of length $\le 3$.}
\label{allcase2}
\end{figure}

We put any meridian in the vertical position and develop the tiling on the right side (or the east side).  We find that the development comes back to the original meridian we started with. Therefore the same development can repeat itself. We call the tiling between adjacent meridians \emph{timezones}. By glueing several timezones bounded by the same meridians, we get the earth map tilings.

Figure \ref{map1} gives the earth map tiling with the meridian $[1]$ (indicated by the thick line), in both the polar and equator views. Figure \ref{map2} gives the earth map tiling with the meridian $[2]$, which also includes the ``meridian'' $[3']$ (so $[3']$ cannot be a meridian, see proof below). Figure \ref{map3} gives the earth map tiling with the meridian $[3]$.

\begin{figure}[h]
\centering
\begin{tikzpicture}[>=latex,scale=1]


\foreach \x in {0,...,5}
{
\draw[rotate=30+60*\x]
	(0,0) -- (-15:0.6)  -- (-5:1.2) -- (5:1.2) -- (15:0.6) -- cycle
	(-5:1.2) -- (-5:1.6) -- (0:1.8) -- (5:1.6) -- (5:1.2)
	(5:1.6) -- (10:1.6) -- (20:1.2) -- (15:0.6)
	(-5:1.6) -- (-10:1.6) -- (-20:1.2) -- (-15:0.6)
	(10:1.6) -- (10:2.1) -- (0:2) -- (0:1.8)
	(-10:1.6) -- (-10:2.1) -- (0:2) -- (0:1.8)
	(10:2.1) -- (12:2.3) -- (-12:2.3) -- (-10:2.1)
	(20:2.6) -- (12:2.3)
	(20:1.2) -- (20:3)
	(-20:2.6) -- (-12:2.3)
	(-20:1.2) -- (-20:3);
\draw[very thick, rotate=60*\x]
	(0,0) -- (0:3);
}


\foreach \a in {0,1,2}
{
\begin{scope}[xshift=5cm+2*\a cm]

\foreach \x in {1,-1}
\foreach \y in {1,-1}
{
\draw[xscale=\x,yscale=\y]
	(0,0) -- (0,0.2) -- (0.3,0.3) -- (0.5,0) -- (0.7,0) -- (0.7,1)
	(0.3,0.3) -- (0.3,0.6) -- (0.7,0.7)
	(0,0.6) -- (0.3,0.6);
\draw[xscale=\x,yscale=\y,very thick]
	(1,0) -- (1,1);
}

\end{scope}
}

\node at (7,1.2) {\small $\nu$};
\node at (7,-1.2) {\small $\sigma$};

\end{tikzpicture}
\caption{Earth map tiling, distance between poles $=1$.}
\label{map1}
\end{figure}
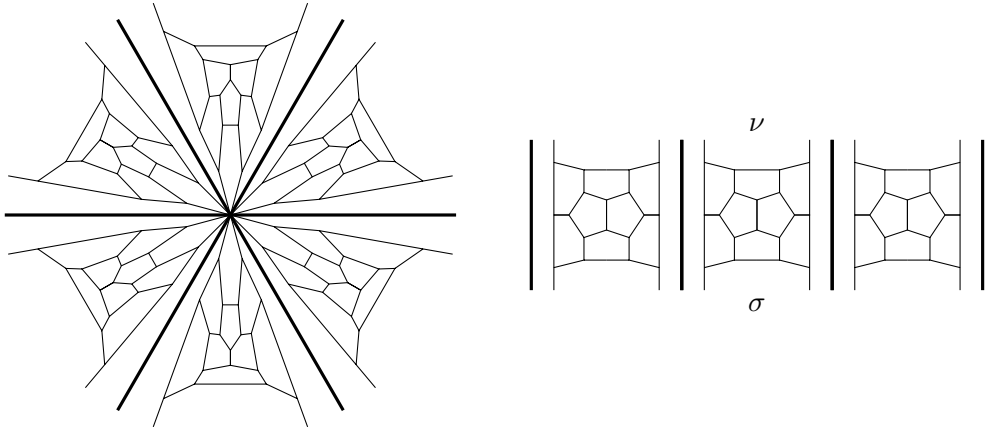

\begin{figure}[h]
\centering
\begin{tikzpicture}[>=latex,scale=1]


\foreach \x in {0,...,5}
{
\draw[rotate=60*\x]
	(0,0) -- (0:0.6) -- (10:1.2) -- (18:1.2) -- (25:0.6)
	(0,0) -- (25:0.6) -- (32:1.2) -- (40:1.2) -- (50:0.6)
	(10:1.2) -- (10:3)
	(50:0) -- (50:3)
	(18:1.2) -- (20:1.5) -- (30:1.5) -- (32:1.2)
	(20:1.5) -- (18:1.8) 
	(32:1.8) -- (30:1.5)
	(10:2.1) -- (18:1.8) -- (22:2.1) -- (32:1.8) -- (40:2.1)
	(22:2.1) -- (20:2.3)
	(10:2.6) -- (20:2.3) -- (40:2.3)
	(40:1.2) -- (40:2.3) -- (50:2.6)
	;
\draw[very thick, rotate=60*\x]
	(0,0) -- (0:3);
}


\foreach \a in {0,1,2}
{
\begin{scope}[xshift=5cm+2*\a cm]

\foreach \y in {1,-1}
{
\draw[yscale=\y]
	(-1,0) -- (-0.8,0) -- (-0.6,0.3) -- (-0.6,1)
	(-0.6,0.3) -- (-0.3,0.2) -- (-0.1,0.4) -- (0.2,0) -- (0.4,0)
	(-0.3,0.2) -- (-0.3,0)
	(-0.6,0.7) -- (-0.1,0.6) -- (-0.1,0.4)
	(-0.1,0.6) -- (0.5,0.5)
	(0.4,0) -- (0.5,0.5) -- (0.75,0.5);
\draw[yscale=\y,very thick]
	(1,0) -- (1,1)
	(0.75,0) -- (0.75,1);
}

\end{scope}
}

\draw[xshift=5cm, very thick]
	(-1,-1) -- (-1,1);
\draw
	(10,0) -- ++(0.2,0);

\node at (7,1.2) {\small $\nu$};
\node at (7,-1.2) {\small $\sigma$};

\foreach \x in {-1,0,1}
{
\draw[->]
	(7.75,2) node[above] {\small $[3']$} -- (7.75+2*\x,1.1);
\draw[->]
	(6,-2) node[below] {\small $[2]$} -- (6+2*\x,-1.1);
}

\end{tikzpicture}
\caption{Earth map tiling, distance between poles $=2$.}
\label{map2}
\end{figure}

\begin{figure}[h]
\centering
\begin{tikzpicture}[>=latex,scale=1]


\foreach \x in {0,...,17}
\draw[rotate=15+20*\x]
		(0,0) -- (0:0.6)  -- (5:1.2) -- (15:1.2) -- (20:0.6);
		
\foreach \x in {0,...,5}
{
\draw[rotate=15+60*\x]
		(5:1.2) -- (5:1.6) -- (-5:1.6) -- (-5:1.2)
		(25:1.2) -- (25:1.6) -- (15:1.6) -- (15:1.2)
		(45:1.2) -- (45:3)
		(35:1.2) -- (35:3)
		(5:1.6) -- (10:2) -- (15:1.6)
		(10:2) -- (10:2.3) 
		(0:2.6) -- (10:2.3) -- (25:2.3) -- (35:2.6)
		(-5:1.6) -- (0:2.6) -- (0:3)
		(25:1.6) -- (25:2.3);
\draw[rotate=60*\x,very thick]	
	(0,0) -- (-5:0.6) -- (0:1.2) -- (0:3);
}


\foreach \a in {0,1,2}
{
\begin{scope}[xshift=5cm+2*\a cm]

\foreach \y in {1,-1}
{
\draw[scale=\y]
	(-1,-0.2) -- (-0.8,-0.2) -- (-0.7,0.2) -- (-0.7,1)
	(-0.7,0.2) -- (-0.4,0.2) -- (-0.4,-0.2) -- (-0.6,-0.5) -- (-0.8,-0.2)
	(-0.4,0.2) -- (-0.2,0.5) -- (0,0.2) -- (0,-0.2) -- (-0.4,-0.2)
	(-0.2,0.5) -- (0.2,0.8) -- (0.6,0.5)
	(0.2,0.8) -- (0.2,1);
\draw[yscale=\y,very thick]
	(1,-1) -- (1,1);
}

\end{scope}
}
\draw[xshift=5cm, very thick]
	(-1,-1) -- (-1,1);
\draw
	(10,-0.2) -- ++(0.2,0)
	(4,0.2) -- ++(-0.2,0);

\node at (7,1.2) {\small $\nu$};
\node at (7,-1.2) {\small $\sigma$};

\end{tikzpicture}
\caption{Earth map tiling, distance between poles $=3$.}
\label{map3}
\end{figure}

\begin{theorem}\label{2vertex}
There are five families of combinatorial pentagonal tilings of the sphere with exactly two vertices of degree $>3$. The five families are given by Figures \ref{map5}, \ref{map4}, \ref{map1}, \ref{map2}, \ref{map3}.
\end{theorem}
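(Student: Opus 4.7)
My plan is to combine the earlier theorems with a case analysis on the distance between the two poles.

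First, let $\nu$ and $\sigma$ denote the two vertices of degree $>3$. If $d(\nu,\sigma)\ge 4$, then the distance hypothesis of Theorem \ref{distance4} is automatically satisfied (since \emph{all} high degree vertices are among $\{\nu,\sigma\}$), and the tiling must be either the earth map tiling in Figure \ref{map5} or the one in Figure \ref{map4}. It therefore suffices to handle the case $d(\nu,\sigma)\le 3$, where a shortest path (meridian) between $\nu$ and $\sigma$ has one of the four types $[1]$, $[2]$, $[3]$, $[3']$ in Figure \ref{allcase2}.

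For each meridian type I orient the meridian vertically with $\nu$ at the top and $\sigma$ at the bottom and develop the tiling on the eastern side. Since every vertex other than $\nu$ and $\sigma$ has degree $3$, Lemma \ref{sequence} lets me build the tiles one by one, at each step using the edge at $\nu$ (respectively $\sigma$) closest to the meridian already drawn. As in the proofs of Lemma \ref{cycle} and Theorem \ref{distance5}, I would argue that this construction is forced to be \emph{generic}: any identification of a newly produced vertex with an existing one would trap part of the sphere inside a sub-region bounded by only a few edges, and Lemma \ref{cycle} (together with the fact that neither $\nu$ nor $\sigma$ can sit inside such a small degree-$3$ region) would rule such identifications out. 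The generic development eventually meets the eastern side at another path from $\nu$ to $\sigma$, and by direct inspection this path is again of the same meridian type for $[1]$, $[2]$, $[3]$, producing the \emph{timezones} visible in Figures \ref{map1}, \ref{map2}, \ref{map3}. Gluing $k$ such timezones along their shared meridians closes the sphere, with $k$ equal to the common degree of $\nu$ and $\sigma$; this gives one family per type.

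The type $[3']$ needs separate treatment. The forced development starting from a $[3']$-meridian produces exactly the same timezone as type $[2]$, and that timezone already contains a path of length $2$ from $\nu$ to $\sigma$ (the $[2]$-path indicated in Figure \ref{map2}). This contradicts the assumption that the chosen $[3']$-path had minimal length $3$, so $[3']$ cannot actually occur as a meridian. Combined with the two families from Theorems \ref{distance5} and \ref{distance4} and the three families from $[1]$, $[2]$, $[3]$, this gives exactly the five families claimed.

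The main obstacle will be the careful exclusion of non-generic reductions in the development of each timezone, particularly when the poles have small degree and their neighborhoods interact. I expect to dispatch these cases uniformly, following the method of Figures \ref{xyz} and \ref{xyz_reduce} in the proof of Theorem \ref{distance5}: each possible identification either produces a configuration appearing in Figure \ref{impossible}, or reduces the tiling to one with at most one high-degree vertex (excluded by Theorem \ref{1vertex}), or shrinks the remaining region to a boundary already dismissed in the proof of Lemma \ref{cycle}.
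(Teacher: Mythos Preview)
Your overall strategy matches the paper's: reduce to meridian types $[1],[2],[3],[3']$, develop eastward into timezones using Lemma~\ref{sequence}, dismiss non-generic identifications via Lemma~\ref{cycle}, and observe that $[3']$ collapses into the $[2]$ family. So the architecture is right.

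There is, however, one genuine gap. You assert that ``gluing $k$ such timezones along their shared meridians closes the sphere, with $k$ equal to the common degree of $\nu$ and $\sigma$,'' but you never justify that the process terminates cleanly. After successively laying down timezones, the remaining region is bounded on the west by the last meridian produced and on the east by the original meridian, with some number of unused edges at each pole. Nothing you have said rules out the possibility that one pole has zero or one edge left while the other still has several, so that no further timezone fits and yet the region is not a single tile. The paper treats this explicitly as its ``third thing'': it identifies the residual configurations (Figure~\ref{special2} for meridian $[3]$, and analogues for the others) and kills them with Lemma~\ref{cycle}. Your proposal needs this end-game analysis; the small-degree interactions you flag in your last paragraph are a different issue (local non-generic identifications during the construction of a single timezone), not this global parity/closure obstruction.

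A smaller point: when you invoke Lemma~\ref{cycle} to dismiss non-generic identifications, you must ensure that one side of the resulting short closed path has no high-degree vertex in its interior. With two poles present this is not automatic; the paper (Figure~\ref{zone}) shows cases where the obvious closed path traps a pole on each side, and one must reroute the path through $\nu$ or $\sigma$ before Lemma~\ref{cycle} applies. Your parenthetical ``neither $\nu$ nor $\sigma$ can sit inside such a small degree-$3$ region'' gestures at this, but the actual argument requires choosing the path with some care.
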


\begin{proof}
We need to argue three things. The first is what happens to the meridian $[3']$. The second is to justify the developments of the tilings in Figures \ref{map1}, \ref{map2}, \ref{map3} by dismissing the many possible non-generic cases. The third is to study the special cases that, after successively developing many timezones, there may not be enough available edges at the two poles to allow further development. 

First, the meridian $[3']$ is created at the end of developing the right side of the meridian $[2]$. If we start with $[3']$ and carry out the development, then we immediately get the meridian $[2]$ on the right side and furthermore the same tiles in the equator view in Figure \ref{map2}. So $[2]$ and $[3']$ give the same tiling. In particular, $[3']$ cannot be the shortest path and therefore cannot be the meridian.

Second, the non-generic developments can be dismissed for the reason largely similar to the proof of Lemma \ref{cycle}. The idea is that, after using Lemma \ref{sequence} to create a new tile, the identification of a newly created vertex and an existing vertex produces a simple closed path. If the length of the path is $\le 7$ and one of the regions divided by the path does not contain vertices of degree $>3$ in the interior, then Lemma \ref{cycle} can be applied to the region to dismiss the case.

For example, we start with the meridian $[3]$ and develop the tiles in Figure \ref{zone}. In the first picture, the shaded tile is newly created, together with a new vertex indicated by the dot. If the dot is identified with the south pole $\sigma$, then we get the second picture, and we can apply Lemma \ref{cycle} to the ``obvious'' simple closed path of length $6$.

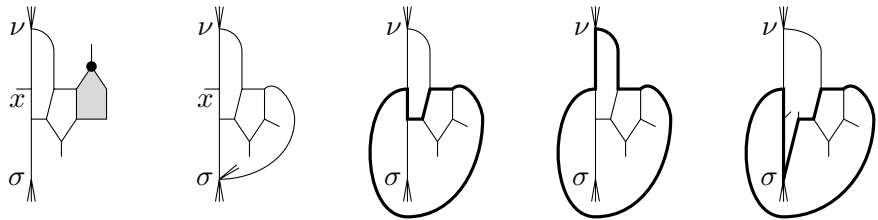
\begin{figure}[h]
\centering
\begin{tikzpicture}[>=latex,scale=1]

\fill[gray!30]
	(-0.2,0.5) -- (-0.4,0.2) -- (-0.4,-0.2) -- (0,-0.2) -- (0,0.2) -- cycle;

\draw
	(-1,-1) -- ++(-90:0.3)
	(-1,-1) -- ++(-80:0.3)
	(-1,-1) -- ++(-100:0.3)
	(-1,1) -- ++(90:0.3)
	(-1,1) -- ++(80:0.3)
	(-1,1) -- ++(100:0.3)
	(-1,-0.2) -- (-0.8,-0.2) -- (-0.7,0.2) -- (-0.7,0.7) arc (0:90:0.3)
	(-0.7,0.2) -- (-0.4,0.2) -- (-0.4,-0.2) -- (-0.6,-0.5) -- (-0.8,-0.2)
	(-0.4,0.2) -- (-0.2,0.5) -- (0,0.2) -- (0,-0.2) -- (-0.4,-0.2)
	(-1,0.2) node[below left=-2] {\small $x$} -- ++(-0.2,0)
	(-0.2,0.5) -- ++(0,0.3)
	(-0.6,-0.5) -- ++(0,-0.2)
	(-1,-1) node[left=-2] {\small $\sigma$} -- (-1,1) node[left=-2] {\small $\nu$} ;
\fill
	(-0.2,0.5) circle (0.07);

\draw[xshift=2.5cm]
	(-1,-1) -- ++(-90:0.3)
	(-1,-1) -- ++(-80:0.3)
	(-1,-1) -- ++(-100:0.3)
	(-1,1) -- ++(90:0.3)
	(-1,1) -- ++(80:0.3)
	(-1,1) -- ++(100:0.3)
	(-1,-1) -- ++(30:0.3)
	(-1,-1) -- ++(40:0.3)
	(-1,-0.2) -- (-0.8,-0.2) -- (-0.7,0.2) -- (-0.7,0.7) arc (0:90:0.3)
	(-0.7,0.2) -- (-0.4,0.2) -- (-0.4,-0.2) -- (-0.6,-0.5) -- (-0.8,-0.2)
	(-1,0.2) node[below left=-2] {\small $x$} -- ++(-0.2,0)
	(-0.6,-0.5) -- ++(0,-0.2)
	(-1,-1) node[left=-2] {\small $\sigma$} -- (-1,1) node[left=-2] {\small $\nu$}
	(-0.4,-0.2) -- ++(0.2,-0.1);
\draw[xshift=2.5cm]
	(-0.4,0.2) to[out=45,in=90] (0,-0.2) to[out=-90,in=0] (-1,-1);

\draw[xshift=5cm]
	(-1,-1) -- ++(-90:0.3)
	(-1,-1) -- ++(-80:0.3)
	(-1,-1) -- ++(-100:0.3)
	(-1,1) -- ++(90:0.3)
	(-1,1) -- ++(80:0.3)
	(-1,1) -- ++(100:0.3)
	(-1,-0.2) -- (-0.8,-0.2) -- (-0.7,0.2) -- (-0.7,0.7) arc (0:90:0.3)
	(-0.7,0.2) -- (-0.4,0.2) -- (-0.4,-0.2) -- (-0.6,-0.5) -- (-0.8,-0.2)
	(-0.6,-0.5) -- ++(0,-0.2)
	(-1,-1) node[left=-2] {\small $\sigma$} -- (-1,1) node[left=-2] {\small $\nu$}
	(-0.4,-0.2) -- ++(0.2,-0.1);
\draw[xshift=5cm,very thick]
	(-0.4,0.2) to[out=45,in=90] (0,-0.2) to[out=-90,in=0] (-1,-1.5) to[out=180,in=180] (-1,0.2) -- (-1,-0.2) -- (-0.8,-0.2) -- (-0.7,0.2) -- cycle;

\draw[xshift=7.5cm]
	(-1,-1) -- ++(-90:0.3)
	(-1,-1) -- ++(-80:0.3)
	(-1,-1) -- ++(-100:0.3)
	(-1,1) -- ++(90:0.3)
	(-1,1) -- ++(80:0.3)
	(-1,1) -- ++(100:0.3)
	(-1,-0.2) -- (-0.8,-0.2) -- (-0.7,0.2) -- (-0.7,0.7) arc (0:90:0.3)
	(-0.7,0.2) -- (-0.4,0.2) -- (-0.4,-0.2) -- (-0.6,-0.5) -- (-0.8,-0.2)
	(-0.6,-0.5) -- ++(0,-0.2)
	(-1,-1) node[left=-2] {\small $\sigma$} -- (-1,1) node[left=-2] {\small $\nu$}
	(-0.4,-0.2) -- ++(0.2,-0.1);
\draw[xshift=7.5cm,very thick]
	(-0.4,0.2) to[out=45,in=90] (0,-0.2) to[out=-90,in=0] (-1,-1.5) to[out=180,in=180] (-1,0.2) -- (-1,1) arc (90:0:0.3) -- (-0.7,0.2) -- cycle;
	
\draw[xshift=10cm]
	(-1,-1) -- ++(-90:0.3)
	(-1,-1) -- ++(-80:0.3)
	(-1,-1) -- ++(-100:0.3)
	(-1,1) -- ++(90:0.3)
	(-1,1) -- ++(80:0.3)
	(-1,1) -- ++(100:0.3)
	(-0.8,-0.2) -- (-0.6,-0.2) -- (-0.5,0.2) -- (-0.5,0.7) arc (0:90:0.5 and 0.3)
	(-0.5,0.2) -- (-0.2,0.2) -- (-0.2,-0.2) -- (-0.4,-0.5) -- (-0.6,-0.2)
	(-0.4,-0.5) -- ++(0,-0.2)
	(-1,-1) node[left=-2] {\small $\sigma$} -- (-1,1) node[left=-2] {\small $\nu$}
	(-1,-0.2) -- ++(0.1,0.1)
	(-0.8,-0.2) -- ++(0,0.1)
	(-0.2,-0.2) -- ++(0.2,-0.1);
\draw[xshift=10cm,very thick]
	(-0.2,0.2) to[out=45,in=90] (0.2,-0.2) to[out=-90,in=0] (-1,-1.5) to[out=180,in=180] (-1,0.2) -- (-1,-1) -- (-0.8,-0.2) -- (-0.6,-0.2) -- (-0.5,0.2) -- cycle;

\end{tikzpicture}
\caption{Non-generic developments starting from $[3]$.}
\label{zone}
\end{figure}

If the dot is identified with $x$, then we get the third picture. However, Lemma \ref{cycle} can be applied neither to the inside nor to the outside of the thick simple closed path, because either side contains a vertex of degree $>3$ in the interior. To resolve the problem, we may either change the simple closed path to the one in the fourth picture and apply Lemma \ref{cycle} to the outside, or we may modify the simple closed path in the third picture to the one in the fifth picture and apply Lemma \ref{cycle}.

With suitable choice of the sequence of constructing tiles and extra care, we can indeed show that, starting from any meridian, the developments to the right side must be generic.

\begin{figure}[h]
\centering
\begin{tikzpicture}[>=latex,scale=1]

\foreach \a in {0,3}
{
\foreach \x in {0,...,5}
\draw[xshift=\a cm, rotate=60*\x]
	(30:0.9) -- (90:0.9);
	
\draw[xshift=\a cm]
	(0,0.9) -- ++(-90:0.3)
	(0,0.9) -- ++(-80:0.3) 
	(0,0.9) -- ++(-100:0.3)
	(30:0.9) -- ++(30:-0.3)
	(150:0.9) -- ++(150:0.3)
	(-30:0.9) -- ++(-30:0.3)
	(210:0.9) -- ++(210:-0.3);
}

\draw
	(0,-0.9) -- (0,-1.2)
	(3,-0.9) -- (3,-0.6);

\end{tikzpicture}
\caption{Failing cases for the meridian $[3]$.}
\label{special2}
\end{figure}
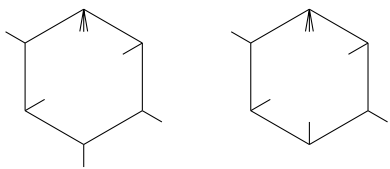

Now we deal with the third thing. We start with any meridian and add successive timezones. As long as we have at least two edges available at each pole, we may continue adding timezone. If there are exactly two edges available at each poles, then we can complete the final timezone and get the earth map tiling. This fails exactly when we have no or one edge left in one pole and at least one edge left at the other pole. Figure \ref{special2} describes the failing cases for the meridian $[3]$. Applying Lemma \ref{cycle}, these failing cases cannot happen. The same reason applies to the other meridians.
\end{proof}

\subsection*{Acknowledgements}
Part of the work is based on a HKUST UROP project by S. L. Wang and Y. Zhou. I would like to thank them for their contributions.

\end{document}